\newcommand{\onlyifaccepted}[1]{{#1}}
\newcommand{\ifpreprintelse}[2]{{#2}}
\begin{document}
\title{Global Optimization Algorithm through \\ High-Resolution Sampling}

\author{\name Daniel Cortild \email d.cortild@rug.nl \\
      \addr University of Groningen, Netherlands \\
      Laboratoire de Finance des Marchés de l’Energie, Dauphine, CREST, EDF R\&D, France
      \AND
      \name Claire Delplancke \email claire.delplancke@edf.fr \\
      \addr EDF R\&D Palaiseau, France
      \AND
      \name Nadia Oudjane \email nadia.oudjane@edf.fr \\
      \addr Laboratoire de Finance des Marchés de l’Energie, Dauphine, CREST, EDF R\&D, France
      \AND
      \name Juan Peypouquet \email j.g.peypouquet@rug.nl \\
      \addr University of Groningen, Netherlands 
      }

\def\month{09}  
\def\year{2025} 
\def\openreview{\url{https://openreview.net/forum?id=r3VEA1AWY5}} 

\maketitle

\begin{abstract}
    We present an optimization algorithm that can identify a global minimum of a potentially nonconvex smooth function with high probability, assuming the Gibbs measure of the potential satisfies a logarithmic Sobolev inequality. Our contribution is twofold: on the one hand we propose said global optimization method, which is built on an oracle sampling algorithm producing arbitrarily accurate samples from a given Gibbs measure. On the other hand, we propose a new sampling algorithm, drawing inspiration from both overdamped and underdamped Langevin dynamics, as well as from the high-resolution differential equation known for its acceleration in deterministic settings. While the focus of the paper is primarily theoretical, we demonstrate the effectiveness of our algorithms on the Rastrigin function, where it outperforms recent approaches.
\end{abstract}

\section{Introduction}
Smooth nonconvex optimization remains a challenge with broad applications in machine learning and statistical inference. Despite significant advances, convex optimization techniques often lead to suboptimal or computationally infeasible solutions in many inherently nonconvex real-world problems. 

In this paper, we focus on the unconstrained global minimization problem: given a smooth nonconvex potential $U\colon \R^d \to \R$, we search for a point $x^*$ such that
\[
    x^* \in \argmin_{x \in \R^d} U(x),
\]
assuming such a point exists.

A recent trend in optimization consists of studying continuous-time versions of algorithms to obtain better estimates for their discrete-time counterparts. Notable progress has been made in accelerating convergence of first-order optimization methods by analyzing second-order dynamical systems in their continuous-time formulation. In specific, we shall interest ourselves in the high-resolution differential equation, given by
\begin{equation}\label{S1::eq:second}
    \ddot x(t)+\alpha\dot x(t)+\beta \nabla^2 U(x(t))\dot x(t)+\gamma\nabla U(x(t))=0,
\end{equation}
\daniel{where $\alpha, \beta, \gamma>0$ could in principle depend on time, but are supposed constants as indicated by the notation. 
The equation was originally introduced in \cite{alvarez_secondorder_2002} and has since been further explored in \cite{attouch_fast_2016}, to mitigate the oscillations. The algorithmic consequences of this, including the connection with Nesterov's method, were investigated independently in \cite{attouch_firstorder_2022} and \cite{shi_understanding_2022}. The theoretical convergence rates obtained for the resulting algorithmic framework were comparable to those established for Nesterov's method in the convex case. However, in the strongly convex case, the theoretical guarantees were more conservative. Nevertheless, this motivated intense research within the convex optimization community, and the high-resolution differential equation seems to be displacing the classical differential equation studied in \cite{su_differential_2016} and \cite{attouch_fast_2018}, which corresponds to the overdamped Langevin system, as the preferred continuous-time model for Nesterov's acceleration. Amongst other reasons, the high-resolution model extends better to the nonsmooth setting, and it captures the linear convergence rates of FISTA and the Optimized Gradient Method in the strongly convex case, while the overdamped one does not. These characteristics, coupled with the more stable trajectories it enables and the potential for integrating other techniques, such as restart schemes \citep{su_differential_2016}, underscore the high-resolution differential equation’s significance as a promising area of interest in convex optimization. To the best of our knowledge, only the convergence to critical points has been shown for Equation \eqref{S1::eq:second} in nonconvex landscapes
\citep{alvarez_secondorder_2002}.}

By setting \( y(t) = \dot{x}(t) + \beta \nabla U(x(t)) \) and renaming the parameter $\gamma$, we can rewrite Equation \eqref{S1::eq:second} as a first-order system
\begin{equation}\label{S1::eq:firstorder}
    \begin{cases}
        \dot{x}(t) &= -\beta \nabla U(x(t)) + y(t) \\
        \dot{y}(t) &= -\gamma \nabla U(x(t)) - \alpha y(t).
    \end{cases}
\end{equation}
This reformulation has the benefit of not requiring the Hessian of $U$, making it more user-friendly, whilst still preserving the favourable convergence results.

However, deterministic models like System \eqref{S1::eq:firstorder} may struggle when the potential $U$ is nonconvex, as they can become trapped in local minima and fail to identify the global minimizer. To overcome this limitation, it has been proposed to add stochasticity to the dynamics to enable them to escape local minima. This stochasticity can come in the form of random perturbations, encouraging the dynamics to navigate through more complex, potentially nonconvex, landscapes. This perspective naturally leads us to consider stochastic differential equations, specifically the Langevin dynamics \citep{langevin_theory_1908}, which combine the advantages of gradient flows with stochastic elements. Rather than the iterates of these dynamics, we study their law, and hope for it to concentrate around the global minimizers of the potential. This pushes us towards sampling problems, where we aim to produce samples from a given distribution $\bmu\propto \exp(-U)$.

Many problems in statistics require sampling from probability distributions. Sampling through the Langevin dynamics is a well-studied approach when the target distribution is strongly log-concave (or equivalently, when the potential is strongly convex) (see \cite{durmus_sampling_2016, dalalyan_userfriendly_2019, cheng_convergence_2018, dalalyan_sampling_2020} and the references therein), and has recently also been studied in the non log-concave setting, when it, for instance, verifies a log-Sobolev inequality \citep{vempala_rapid_2019, ma_there_2021}, a Poincaré inequality \citep{chewi_analysis_2022}, a weak Poincaré inequality \citep{mousavi-hosseini_complete_2023}, or even in the fully nonconvex setting \citep{balasubramanian_theory_2022}.

The simplest variant of the Langevin dynamics is the overdamped Langevin dynamics, governed by 
\[
    dX_t=-\gamma \nabla U(X_t)dt+\sqrt{2\gamma}dB_t,
\]
where $(B_t)$ is standard Brownian motion, $\gamma>0$ is a free parameter and $U$ is the negative logarithm of the distribution we wish to sample from. Under weak assumptions, the invariant distribution of the dynamics is exactly $\bmu\propto \exp(-U)$. Convergence of the Euler discretization of the overdamped Langevin dynamics in Wasserstein-$2$ distance under strong convexity of $U$ was shown in \cite{durmus_sampling_2016}, and convergence in Kullback-Leibler divergence under a log-Sobolev assumption on $\bmu$ in \cite{vempala_rapid_2019}. Accelerated rates may be obtained under different discretizations. For instance, see \cite{dalalyan_userfriendly_2019} for strongly log-concave targets.

A different approach to faster convergence involves the underdamped Langevin dynamics, governed by the stochastic differential equation 
\[
    \begin{cases}
        dX_t &= V_tdt \\
        dV_t &= (-\gamma \nabla U(X_t)-V_t)dt + \sqrt{2\gamma}dB_t,
    \end{cases}
\]
where $V_t$ represents the velocity. Under weak assumptions, the invariant measure is $\bmu(x, v)\propto \exp(-U(x)-\|v\|^2/(2\gamma))$. Accelerated convergence with respect to the overdamped dynamics was shown in Wasserstein-$2$ distance in the strongly log-concave case by \cite{cheng_convergence_2018} and in Kullback-Leibler divergence under a log-Sobolev inequality by \cite{ma_there_2021}. We again note that different discretizations are possible, we cite for instance \cite{dalalyan_sampling_2020} and \cite{schuh_convergence_2024}.

Sampling algorithms for optimization were first introduced for strongly convex potentials in \cite{dalalyan_further_2017}, and further studied in \cite{raginsky_nonconvex_2017,xu_global_2018,zhang_hitting_2017,tzen_local_2018}, in the nonconvex setting. All of these focused on discretizations of the overdamped Langevin dynamics. The underdamped Langevin dynamics have also been studied, we refer to \cite{gao_breaking_2020} and \cite{borysenko_coolmomentum_2021} for more details. Discretizations of other underlying processes were investigated in \cite{chen_accelerating_2018} \cite{zhang_new_2024}. Langevin dynamics have also been studied for global optimization outside the context of sampling. We refer interested readers to \cite{chen_langevin_2024} and the references therein.

\subsection{Contribution}

The contributions of the paper are the following:
\begin{enumerate}[leftmargin=*]
    \item We design a global optimization algorithm capable of minimizing nonconvex functions and obtaining arbitrarily accurate solutions with high probability. This optimization algorithm relies on an oracle sampling algorithm.
    \item We propose a new variant of the classical Langevin dynamics, both for continuous-time and discrete-time. The dynamics is inspired by the first-order high-resolution System \eqref{S1::eq:firstorder}, which is known to exhibit accelerated convergence rates in the deterministic convex setting. We anticipate that this study will encourage further exploration of this system, with the potential to deliver a comparable breakthrough in the field of nonconvex sampling as it has achieved in convex optimization. This sampling algorithm will then serve as oracle algorithm in our global optimization algorithm.
\end{enumerate}

\subsection{Structure}

Section \ref{S2b::sec:prelim} introduces notation and preliminaries. Section \ref{S5::sec:optimization} is dedicated to the design of our global optimization algorithm. In Section \ref{S3::sec:DNLA} we study a novel continuous- and discrete-time dynamics, and formulate the sampling results. Finally, in Section \ref{S6::sec:numerical}, we illustrate our results numerically on the Rastrigin function, where we improve on current methods. Technical results and proofs are provided in the appendix.

\section{Notation and Assumptions}\label{S2b::sec:prelim}
The following standing assumptions on the potential $U\colon \R^d\to \R$ will be valid throughout the paper:
\begin{itemize}[leftmargin=*]
    \item $U$ is twice differentiable, $L$-smooth and has Lipschitz continuous and bounded Hessian.
    \item There exists an $a_0>0$ such that $\exp(-a_0U)$ is integrable.
    \item $U$ has a nonzero finite number of 
    global minimizers, and admits no global minimizers at infinity. We define its minimal value to be $U^*$.
\end{itemize}

Let $\|\cdot\|$ denote the Euclidean norm on $\R^d$ and $\P$ denote the space of probability measures on $\R^d$. With abuse of notation, we shall denote interchangeably by $\bmu\in \P$ the probability distribution and its density function with respect to the Lebesgue measure, in the case where it exists. For a given potential $U\colon \R^d\to \R$ and reals $a\ge a_0$ and $b>0$, we define $\bmu^{a}\in \P$ and $\bmu^{a, b}\in \mathcal P(\R^{2d})$ to be the probability distributions whose densities satisfy 
\begin{align*}
    \bmu^a(x)&\propto \exp(-aU(x)) \quad 
    \hbox{and}\quad \bmu^{a,b}(x, y)\propto \exp\left(-aU(x)-b\frac{\|y\|^2}2\right),
\end{align*}
which are well-defined by assumption.

Given a distribution $\bmu\in \P$, we denote by 
\[
    \E_{X\sim \bmu}[f(X)] = \int_{\R^d}f(x)\bmu(dx)
\]
the expected value of $f(X)$ where $X\sim \bmu$. Whenever the random variable or its distribution is clear from context, we shall abbreviate this to $\E_{\bmu}[f(X)]$ or $\E[f(X)]$.

Let $\bmu, \bnu\in \P$ both have a density with respect to the Lebesgue measure and have full support. We define the \textbf{total variation distance} between $\bmu$ and $\bnu$ as 
\[
    \|\bmu-\bnu\|_{\TV}=\sup\left\{\left|\E_{\bmu}[f]-\E_\bnu[f]\right|\colon \|f\|_\infty\le 1\right\}.
\]
We define the \textbf{Kullback-Leibler divergence} of $\bmu$ with respect to $\bnu$ as 
\[
    \KL(\bmu\|\bnu)=
    \E_{X\sim\bmu}\left[\log \frac{\bmu(X)}{\bnu(X)}\right].
\]
Moreover, we define their \textbf{relative Fisher information} as 
\[
    \Fi(\bmu\|\bnu)=\E_{X\sim \bmu}\left[\left\|\nabla \log \frac{\bmu(X)}{\bnu(X)}\right\|^2\right].
\]
Finally, we define the \textbf{Wasserstein-$2$ distance} between $\bmu$ and $\bnu$ as 
\[
    W_2(\bmu, \bnu)=\left(\inf_{\bzeta\in \Gamma(\bmu, \bnu)}\E_{(X, Y)\sim \bzeta}\left[\left\|X-Y\right\|^2_2\right]\right)^{1/2},
\]
where $\Gamma(\bmu, \bnu)$ is the set of couplings of $\bmu$ and $\bnu$, namely the set of distributions $\bzeta\in \mathcal P(\R^{2d})$ such that $\bzeta(A\times \R^d)=\bmu(A)$ and $\bzeta(\R^d\times A)=\bnu(A)$ for all $A\in \mathcal B(\R^d)$. The infimum is always attained, and we call the minimizers optimal couplings \citep{villani_optimal_2009}. 

A standard assumption in the literature when dealing with nonconvex sampling is a logarithmic Sobolev inequality \citep{vempala_rapid_2019,ma_sampling_2019,ma_there_2021}, which may be viewed as a Polyak-Łojasiewicz inequality on the space of probability measures \citep{liu_polyaklojasiewicz_2023, chewi_ballistic_2024}. To the best of our knowledge, this assumption is not standard in the field of global optimization. A log-Sobolev inequality on $\bmu$ with coefficient $\rho$ states that, for all $\bnu\in \P$,
\begin{equation}\label{S2b::ineq:LSI}
     \KL(\bnu\|\bmu)\le\frac1{2\rho}\Fi(\bnu\|\bmu).
\end{equation}
\begin{remark}\label{S2b::rem:LSI}
    The notion of a logarithmic Sobolev inequality as presented in Inequality \eqref{S2b::ineq:LSI} was originally introduced in \cite{feissner_gaussian_1972} for Gaussian measures, and extended in \cite{gross_logarithmic_1975} for general measures. All strongly log-concave probability distributions satisfy the inequality \citep{bakry_diffusions_1985}, which is stable under bounded perturbations \citep{holley_logarithmic_1987}, tensorization, convolution and mixture \citep{ledoux_concentration_2006, bakry_analysis_2014}, and Lipschitz perturbations \citep{brigati_heat_2024}, although this might come at the expense of the constant. In the case of mixtures of Gaussians of equal variance, the log-Sobolev constant has an exponential dependency in the problem dimension \citep{menz_poincare_2014, schlichting_poincare_2019}. Moreover, measures with potentials which are strongly convex outside a ball satisfy a log-Sobolev inequality \citep{ma_sampling_2019}, although the constant may again have an exponential dependency in the dimension.
\end{remark}

\begin{assumption}\label{S2b::ass:LSmooth}\label{S2b::ass:quad}\label{S2b::ass:LSI}
    \daniel{
    The Gibbs measures $\bmu^{a}$ of $U\colon \R^d\to \R$ satisfy a logarithmic Sobolev inequality with coefficients $\rho_{a}>0$, for all $a\ge a_0$.
    }
\end{assumption}

\begin{remark}\label{S2b::rem:LSIa0}
\daniel{
Under Assumption \ref{S2b::ass:LSI}, the measures $\bmu^{a, b}$ also satisfy a log-Sobolev inequality with constant 
$\rho_{a,b}=\min (\rho_a,\rho_b)$
by tensorization, where we know that $\rho_b=1/b$ since the marginal in $y$ of $\bmu^{a, b}$ is strongly log-concave with parameter $b$.
Verifying the logarithmic Sobolev inequality for each $a \ge a_0$ may be challenging in general, however it is readily verified under some structural assumptions on the potential. For instance, it is satisfied if $U = V + F$, where $V$ is strongly convex and $F$ is bounded, as strongly convex potentials induce a Gibbs measure satisfying the log-Sobolev property, which is stable under bounded perturbations. To the best of our knowledge, the tightest known lower bound for $\rho_a$ is given by $\exp(-a\Osc(F))\cdot (a\mu)^{-1}$, where $\mu$ is the strong convexity constant of $V$. As $\Osc(F)$ typically scales with the dimension $d$, this highlights the possible exponential dependency in the dimension and in the parameter $a$ of $\rho_a$, and hence also of $\rho_{a, b}$.
}
\end{remark} 

We now recall Pinsker's Inequality \citep{pinsker_information_1964}, which relates the KL divergence and the total variation distance.

\begin{theorem}[Pinsker's Inequality]\label{S2b::thm:pinsker}
    For any two $\bmu, \bnu\in \P$, it holds that 
    \[
        \|\bmu-\bnu\|_{\TV}\le \sqrt{\frac{\KL(\bnu\|\bmu)}{2}}.
    \]
\end{theorem}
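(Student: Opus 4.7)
The plan is to prove Pinsker's inequality by the classical two-step reduction: first reduce to the Bernoulli case via the data processing inequality for KL divergence, then establish the inequality for Bernoulli distributions via elementary calculus.

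For the reduction, I would fix any measurable set $A \subseteq \R^d$ and observe that the indicator $\mathbf{1}_A$ pushes $\bmu$ and $\bnu$ forward to Bernoulli measures $\bar\bmu_A := \operatorname{Bern}(\bmu(A))$ and $\bar\bnu_A := \operatorname{Bern}(\bnu(A))$. The data processing inequality, which follows from Jensen applied to $-\log$ together with the conditional expectation given $\mathbf{1}_A$, then yields
\[
    \KL(\bar\bnu_A \| \bar\bmu_A) \le \KL(\bnu \| \bmu).
\]

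For the Bernoulli case, I would prove that for all $p, q \in (0, 1)$,
\[
    2(q - p)^2 \le q \log \tfrac{q}{p} + (1 - q) \log \tfrac{1 - q}{1 - p}.
\]
Fixing $p$ and letting $\phi(q)$ denote the right-hand side minus the left, a direct computation gives $\phi(p) = 0$, $\phi'(p) = 0$, and $\phi''(q) = \frac{1}{q(1-q)} - 4 \ge 0$ since $q(1-q) \le 1/4$. Hence $\phi \ge 0$ on $(0,1)$, and combining with the reduction yields $|\bnu(A) - \bmu(A)|^2 \le \frac{1}{2}\KL(\bnu\|\bmu)$ for every measurable $A$.

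To conclude, I would take the supremum over $A$. The variational characterization of $\|\bmu - \bnu\|_{\TV}$ is attained on indicator-type test functions $f = \mathbf{1}_A - \mathbf{1}_{A^c}$ with $\|f\|_\infty \le 1$, so the set-wise bound transfers to the TV distance up to the normalization implicit in the paper's definition. The main obstacle is the calculus verification of the Bernoulli inequality; although elementary, it is the only genuinely analytic step. A direct alternative would bypass the reduction altogether by writing $h = d\bnu/d\bmu$, establishing the pointwise bound $3(t - 1)^2 \le (4 + 2t)(t \log t - t + 1)$ for $t \ge 0$, and applying Cauchy-Schwarz in a single step to obtain $\left(\int |h - 1|\, d\bmu\right)^2 \le 2\KL(\bnu\|\bmu)$; this avoids the Bernoulli detour but requires a slightly more delicate elementary inequality.
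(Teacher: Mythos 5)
Your proposal follows the standard reduction-to-two-points argument: push $\bmu$ and $\bnu$ through an indicator to get Bernoulli laws, invoke the data-processing inequality for $\KL$, and settle the Bernoulli case by convexity. The calculus there is correct: $\phi(p)=\phi'(p)=0$ and $\phi''(q)=\tfrac{1}{q(1-q)}-4\ge 0$ do give $\phi\ge 0$. The paper supplies no proof of its own and only cites \cite{pinsker_information_1964}, so there is no internal argument to compare against; yours is the textbook route.

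The genuine gap is the final step you leave at ``up to the normalization implicit in the paper's definition.'' This is precisely where the statement as written fails, so the hand-wave cannot be repaired. The paper's normalization is
\[
    \|\bmu-\bnu\|_{\TV}=\sup_{\|f\|_\infty\le 1}\left|\E_{\bmu}[f]-\E_\bnu[f]\right| = \int_{\R^d}\bigl|\bmu(x)-\bnu(x)\bigr|\,dx = 2\sup_{A}\bigl|\bmu(A)-\bnu(A)\bigr|,
\]
as your own test functions $f=\mathbf 1_A-\mathbf 1_{A^c}$ make explicit. Your two-point argument proves $|\bmu(A)-\bnu(A)|\le\sqrt{\KL(\bnu\|\bmu)/2}$ for every Borel $A$, so under the paper's definition it delivers $\|\bmu-\bnu\|_{\TV}\le\sqrt{2\,\KL(\bnu\|\bmu)}$, a factor of $2$ weaker than what Theorem~\ref{S2b::thm:pinsker} asserts. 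No argument can bridge this, because $\|\bmu-\bnu\|_{\TV}\le\sqrt{\KL(\bnu\|\bmu)/2}$ is simply false for this normalization: with $\bmu=\mathcal N(0,1)$ and $\bnu=\mathcal N(\epsilon,1)$ one has $\KL(\bnu\|\bmu)=\epsilon^2/2$, so the right side is $\epsilon/2$, while $\|\bmu-\bnu\|_{\TV}=2\bigl(2\Phi(\epsilon/2)-1\bigr)\approx\sqrt{2/\pi}\,\epsilon\approx 0.8\,\epsilon$ for small $\epsilon$. The constant $\sqrt{\KL/2}$ is correct for the \emph{set-wise} total variation $\sup_A|\bmu(A)-\bnu(A)|$, which is half the paper's. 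The paper's one actual use, inside the proof of Theorem~\ref{S5::thm:global}, survives because the test function $e^{-U(\cdot)}$ has range in $[0,1]$, so $\E_{\bmu^a}[e^{-U}]-\E_{\t\bmu}[e^{-U}]\le\sup_A\bigl(\bmu^a(A)-\t\bmu(A)\bigr)\le\sqrt{\KL(\t\bmu\|\bmu^a)/2}$ is all that is needed. Make the last step explicit: prove $\sup_A|\bmu(A)-\bnu(A)|\le\sqrt{\KL(\bnu\|\bmu)/2}$ and note that this, not the quantity in the paper's definition of $\|\cdot\|_{\TV}$, is the inequality that actually feeds the downstream argument.
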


We finish this section by recalling McDiarmid's Inequality \citep{mcdiarmid_method_1989} in the single-variable case.

\begin{lemma}\label{S2b::lem:mcdiar}
    Let $f\colon \R^d\to \R$ satisfy $\Osc(f)\coloneqq \sup(f)-\inf(f)<+\infty$. For any random variable $X$, it holds that 
    \[
        \Pr(\E[f(X)]-f(X)\ge \varepsilon)\le \exp\left(-\frac{2\varepsilon^2}{\Osc(f)^2}\right).
    \]
\end{lemma}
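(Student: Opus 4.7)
The plan is to recognize the statement as a single-variable version of Hoeffding's inequality, since $Y := f(X)$ is a bounded scalar random variable with range equal to $\Osc(f)$. The route is the classical Chernoff approach: bound the moment generating function of $\E[Y] - Y$, apply Markov's inequality, and optimize the free exponential parameter.

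Concretely, I would first set $Y = f(X)$ and $Z = \E[Y] - Y$, observing that $Z$ is centered and almost-surely takes values in an interval of length $\Osc(f) = \sup(f) - \inf(f)$. The key analytic step is to establish Hoeffding's lemma for $Z$, namely
\[
    \E[e^{s Z}] \le \exp\!\left(\frac{s^2 \Osc(f)^2}{8}\right) \quad \text{for all } s \ge 0.
\]
I would prove this in the standard way: write $Z$ as a convex combination $Z = \lambda a + (1-\lambda) b$ with $a, b$ the endpoints of its range and use convexity of $z \mapsto e^{sz}$ to bound $e^{sZ}$ pointwise, take expectations (using $\E[Z]=0$ to pin down $\lambda$), and then carry out a second-derivative estimate on the resulting function of $s$.

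Next, I would apply Markov's inequality to $e^{sZ}$: for any $s > 0$,
\[
    \Pr(Z \ge \varepsilon) = \Pr(e^{sZ} \ge e^{s\varepsilon}) \le e^{-s\varepsilon}\, \E[e^{sZ}] \le \exp\!\left(\frac{s^2 \Osc(f)^2}{8} - s\varepsilon\right).
\]
The minimum of the exponent in $s > 0$ is attained at $s = 4\varepsilon/\Osc(f)^2$, giving the bound $\exp(-2\varepsilon^2/\Osc(f)^2)$, which is exactly what is claimed.

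I do not foresee a major obstacle: the only nonroutine step is the proof of Hoeffding's lemma itself, which is elementary but requires a careful bookkeeping of the auxiliary convex function $\varphi(s) = \log \E[e^{sZ}]$ and the bound $\varphi''(s) \le \Osc(f)^2/4$. Everything else — applying Markov and optimizing over $s$ — is purely computational.
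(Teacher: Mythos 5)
Your proof is correct: it is the standard Chernoff/Hoeffding route (bound the moment generating function of the centered, bounded variable $Z=\E[f(X)]-f(X)$ via Hoeffding's lemma, apply Markov's inequality, and optimize the exponential parameter), and the arithmetic checks out ($s^*=4\varepsilon/\Osc(f)^2$ yields exactly $\exp(-2\varepsilon^2/\Osc(f)^2)$). Note, however, that the paper does not prove this lemma at all — it simply cites \cite{mcdiarmid_method_1989} and states it as a recalled fact — so there is no paper proof to compare against; your self-contained argument is the textbook way to establish the single-variable case.
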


\section{Optimization}\label{S5::sec:probability}\label{S5::sec:optimization}

The idea behind the global optimization algorithm is to sample from a distribution that produces samples close to global minimizers. Specifically, we define $\bmu^*\in \P$ to be a mixture of Dirac measures concentrated around the global minimizers of $U$ with weights as defined in \citep[Equation 18]{hasenpflug_wasserstein_2024}. As such, by \cite{hasenpflug_wasserstein_2024}, under the technical Assumption \ref{S2b::ass:hasen}, there exists a constant $C>0$, depending on $U$ only, satisfying 
\begin{equation}\label{S5::eq:hasen}
    W_2(\bmu^a, \bmu^*)\le C\cdot a^{-1/4}.
\end{equation}
As such, an approximate sample from $\bmu^a$ will yield samples close to global minimizers. Concretely, we suppose we have access to a distribution $\t\bmu$ satisfying 
\begin{equation}\label{S5::hyp:smallEV}
    \KL(\t\bmu\|\bmu^a)\le \varepsilon^2/18,
\end{equation}
for some $\varepsilon>0$, and that we can draw samples from $\t\bmu$.

We are now ready to introduce our global optimization algorithm, dependent on a yet unspecified oracle sub-algorithm, corresponding to a sample from $\t\bmu$.
\begin{algorithm}[H]
\caption{Global Optimization Algorithm}\label{S5::alg:global}
\begin{algorithmic}[1]
\REQUIRE Oracle algorithm.
\STATE Generate $N$ random i.i.d. samples $\tilde X^{(i)}$ according to oracle algorithm where $i=1, \ldots, N$.
\STATE Set $\t X=\t X^{(I)}$ for $I=\argmin_{i=1\ldots, N}U(\t X^{(i)})$.
\end{algorithmic}
\end{algorithm} 

With these results at hand, we may prove convergence in probability of Algorithm \ref{S5::alg:global}.

\begin{theorem}\label{S5::thm:global}
Let $U\colon \R^d\to \R$ satisfy Assumptions \ref{S2b::ass:LSI} and \ref{S2b::ass:hasen}. Fix $\varepsilon\in (0, 1/2)$, $\delta\in (0, 1)$, and suppose 
\begin{equation}\label{S5::eq:bounds}
    a\ge \max\left(a_0, \frac{9C^4L^2}{\varepsilon^2}\right)\quad \text{and}\quad  N\ge \frac{18\ln(1/\delta)}{\varepsilon^2}.
\end{equation}
Suppose we have access to an oracle algorithm that can produce a sample from $\t\bmu$, where $\t\bmu$ satisfies \eqref{S5::hyp:smallEV}. Then, if $\t X$ is simulated according to Algorithm \ref{S5::alg:global} with the same oracle algorithm, it holds that 
\[
    \Pr(U(\t X)-U^*\ge \varepsilon)\le \delta.
\]
\end{theorem}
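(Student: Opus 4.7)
The plan is to chain three reductions. First, turn the Wasserstein bound \eqref{S5::eq:hasen} into a bound on the mean suboptimality gap under $\bmu^a$ via $L$-smoothness. Second, transfer the bound from $\bmu^a$ to $\t\bmu$ using Pinsker's inequality. Third, amplify the resulting one-sample tail estimate by taking the minimum over the $N$ i.i.d.\ draws.

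For the first step, let $(X, X^\star)$ be an optimal coupling of $(\bmu^a, \bmu^*)$. Since $\bmu^*$ is a mixture of Diracs at the global minimizers, $U(X^\star) = U^*$ and $\nabla U(X^\star) = 0$ almost surely, so $L$-smoothness yields $U(X) - U^* \le \tfrac{L}{2}\|X - X^\star\|^2$. Taking expectations and invoking \eqref{S5::eq:hasen},
\[
    \E_{X \sim \bmu^a}\!\left[U(X) - U^*\right] \;\le\; \tfrac{L}{2}\,W_2^2(\bmu^a, \bmu^*) \;\le\; \tfrac{LC^2}{2}\,a^{-1/2} \;\le\; \tfrac{\varepsilon}{6},
\]
where the last inequality uses $a \ge 9C^4L^2/\varepsilon^2$ from \eqref{S5::eq:bounds}. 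Markov's inequality then gives $\Pr_{\bmu^a}(U - U^* \ge \varepsilon) \le 1/6$.

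For the second step, Pinsker's inequality (Theorem \ref{S2b::thm:pinsker}) combined with \eqref{S5::hyp:smallEV} yields $\|\t\bmu - \bmu^a\|_{\TV} \le \sqrt{\varepsilon^2/36} = \varepsilon/6$, so
\[
    \Pr_{\t\bmu}(U - U^* \ge \varepsilon) \;\le\; \Pr_{\bmu^a}(U - U^* \ge \varepsilon) + \|\t\bmu - \bmu^a\|_{\TV} \;\le\; \tfrac{1 + \varepsilon}{6}.
\]
A short algebraic check shows that for $\varepsilon \in (0, 1/2)$, $(1+\varepsilon)/6 \le 1 - \varepsilon^2/18$. For the third step, since the $\t X^{(i)}$ are i.i.d.\ from $\t\bmu$ and $\t X$ minimizes $U$ over them,
\[
    \Pr(U(\t X) - U^* \ge \varepsilon) \;=\; \Pr_{\t\bmu}(U - U^* \ge \varepsilon)^N \;\le\; (1 - \varepsilon^2/18)^N \;\le\; \exp(-N\varepsilon^2/18) \;\le\; \delta,
\]
by the lower bound on $N$ in \eqref{S5::eq:bounds}.

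The main obstacle I expect is the first step: one must use precisely that $\bmu^*$ is concentrated on the global minima, so that $\nabla U(X^\star) = 0$, in order for $L$-smoothness to upgrade a Wasserstein-squared distance of order $a^{-1/2}$ into an integrated suboptimality bound of the same order, which the choice of $a$ then pushes below $\varepsilon/6$. The remaining steps are standard Markov/Pinsker/Chernoff manipulations, calibrated so that the KL budget $\varepsilon^2/18$ produces a $\TV$ bound matching the $1/6$ coming from Markov and thus yields the clean threshold $\varepsilon^2/18$ used in the final amplification.
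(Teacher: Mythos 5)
Your proof is correct and takes a genuinely different route from the paper's. Both arguments share the opening step: $L$-smoothness together with the $W_2$ bound \eqref{S5::eq:hasen} and the choice of $a$ give $\E_{\bmu^a}[U - U^*] \le \varepsilon/6$. After that they diverge. The paper passes to the bounded transform $x \mapsto e^{-U(x)} \in [0,1]$, decomposes $1 - e^{-U(\t X^{(i)})}$ into four terms (mean bound, Jensen, Pinsker transfer, and a concentration term), invokes the single-variable McDiarmid inequality (Lemma \ref{S2b::lem:mcdiar}) on the concentration term, and then uses $1 - e^{-x} \le x$ and $1 - e^{-x} \ge x/2$ for $x \in [0, 1/2]$ to translate back to a statement about $U$. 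You instead apply Markov's inequality directly to the nonnegative variable $U - U^*$ under $\bmu^a$ to get a tail probability of $1/6$, and then transfer to $\t\bmu$ by bounding the change in probability of the event $\{U - U^* \ge \varepsilon\}$ using the same Pinsker/TV step, for a one-sample failure probability of $(1+\varepsilon)/6 < 1/4$; this is in fact a substantially smaller failure probability than the paper's $e^{-\varepsilon^2/18}$, which is close to $1$ for small $\varepsilon$. Your weakening of $(1+\varepsilon)/6$ to $1 - \varepsilon^2/18$ (valid since $\varepsilon^2 + 3\varepsilon - 15 \le 0$ on $(0,1/2)$) is only there to line up with the stated lower bound on $N$; with your bound, $N \gtrsim \log(1/\delta)$ would already suffice, so the hypothesis on $N$ in \eqref{S5::eq:bounds} is more than enough. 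In short: you trade the exponential-transform-plus-McDiarmid machinery for a direct Markov argument, obtaining a cleaner and quantitatively stronger proof of the same statement.
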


\begin{remark}\label{S5::rem:NK}
A natural choice for the oracle algorithm producing samples $\t X^{(i)}$ satisfying \eqref{S5::hyp:smallEV} is an iterative sampling algorithm, producing the sample in $K$ iterations. The total complexity to produce the sample $\t X$ is then of $K\cdot N$, with the possibility of parallelization when $N > 1$. 
For a fixed computational budget, there is a trade-off between the number of iterations $K$ and the number $N$ of samples, as indicated by  Equation \eqref{S5::eq:NK} below, where the number of iterations $K$ will control how small the probability $\Pr(U(\t X^{(1)})\ge \varepsilon)$ is. Therefore, the number of iterations $K$ should be big enough if one wants to ensure that increasing the number of samples $N$ leads to improved accuracy.
\end{remark}

\begin{proof}
    Without loss of generality, set $U^*=0$.

    Let $\t X^{(i)}$ for $i=1, \ldots, N$ be $N$ i.i.d. copies generated according to the oracle algorithm, such that $\t X=\t X^{(I)}$ where $I=\argmin_{i=1,\ldots, N}U(\t X^{(i)})$.

    Note that, for any $i=1, \ldots, N$,
    \begin{subequations}\label{S5::eq:sm}
        \begin{align}
            1-e^{-U(\t X^{(i)})}
            =&~ 1-e^{-\E[U(X^a)]} \label{S5::eq:sm1}\\
            & +e^{-\E[U(X^a)]}-\E[e^{-U(X^a)}] \label{S5::eq:sm2}\\
            & + \E[e^{-U(X^a)}]-\E[e^{-U(\t X^{(i)})}] \label{S5::eq:sm3}\\
            &+ \E[e^{-U(\t X^{(i)})}]-e^{-U(\t X^{(i)})} \label{S5::eq:sm4}.
        \end{align}
    \end{subequations}
    Term \eqref{S5::eq:sm1} is bounded by $L$-smoothness as 
    \begin{align*}
        \E[U(X^a)]
        &= \E[U(X^a)-U(X^*)] \\
        &\le \E\left[\nabla U(X^*)  (X^a-X^*)+\frac{L}{2}\|X^a-X^*\|^2\right]\\
        &=\frac{L}{2}W_2^2(\bmu^a, \bmu^*) 
        \,\le\, \frac{LC^2}{2\sqrt{a}} 
        \, \le\, \frac{\varepsilon}{6},
    \end{align*}
     where $X^*\sim \bmu^*$ such that $\nabla U(X^*)=0$ almost surely, and $(X^a, X^*)\sim \bzeta^*$ for $\bzeta^*$ an optimal coupling between $\bmu^a$ and $\bmu^*$.
     As such, using that $1-e^{-x}\le x$, we obtain that \eqref{S5::eq:sm1} is bounded by $\varepsilon/6$. Moreover, as $x\mapsto \exp(-x)$ is convex, \eqref{S5::eq:sm2} is nonpositive, by Jensen's inequality. Since $x\mapsto \exp(-U(x))$ is bounded by $1$, \eqref{S5::eq:sm3} is bounded as
    \begin{align*}
        \E\left[e^{-U(X^a)}\right]-\E\left[e^{-U(\t X^{(0)})}\right]\le \|\t \bmu-\bmu^{a}\|_{TV}\le \sqrt{\frac{\KL(\t\bmu\|\bmu^{a})}{2}} 
        \le \frac{\varepsilon}{6},
    \end{align*}
    where the second step uses Pinsker's Inequality \ref{S2b::thm:pinsker}.
     Finally, by considering all the bounds on \eqref{S5::eq:sm}, we have 
     \begin{align*}
        \Pr\left(1-e^{-U(\t X^{(i)})}\ge \varepsilon/2\right)
        \le \Pr\left(\E[e^{-U(\t X^{(i)})}]-e^{-U(\t X^{(i)})}\ge \varepsilon/6\right)
        \le \exp(-{\varepsilon^2}/18)
        \le \delta^{1/N},
     \end{align*}
     where the third inequality follows by Lemma \ref{S2b::lem:mcdiar}, as $x\mapsto e^{-U(x)}$ takes values in $[0, 1]$. For $x\in[0,1/2]$, it holds that $1-e^{-x}\ge x/2$, and hence 
     \[
        \Pr(U(\t X^{(i)})\ge \varepsilon)\le \Pr\left(1-e^{-U(\t X^{(i)})}\ge \varepsilon/2\right)\le \delta^{1/N}.
     \]
     As $\t X = \t X^{(I)}$ where $I=\argmin_{i=1,\ldots, N}U(\t X^{(i)})$, and $(\t X^{(i)})_{i=1,\ldots, N}$ are i.i.d., it holds that 
     \begin{equation}\label{S5::eq:NK}
        \Pr(U(\t X)\ge \varepsilon)=\Pr(U(\t X^{(1)})\ge \varepsilon)^N\le \delta.
     \end{equation}
\end{proof}

\begin{remark}\label{S5::rem:eps}
    In Theorem \ref{S5::thm:global}, the bound of $\varepsilon<1/2$ is artificial to the proof. By scaling $U$ one can obtain similar results, up to a constant factor, for any value of $\varepsilon>0$.
\end{remark}

Algorithm \ref{S5::alg:global} depends on a yet unspecified oracle algorithm. A new such oracle algorithm is discussed in Section \ref{S3::sec:DNLA}. However, the modular form of Algorithm \ref{S5::alg:global} provides a simple framework to employ other sampling algorithms, which potentially may lead to faster provable convergence, or be applicable in different contexts.

\section{High-Resolution Langevin}\label{S3::sec:DNLA}
\subsection{Continuous-Time Study}\label{S3::sec:continuous}
As previously highlighted, the high-resolution differential equation introduced in System \eqref{S1::eq:firstorder} has played a pivotal role in advancing the study of accelerated convex optimization algorithms. Aiming to build an algorithm with accelerated convergence rate for nonconvex sampling, we introduce the \textit{High-Resolution Langevin Dynamics}, inspired by these dynamics. Let $(\Omega, \mathcal{F}, \Pr)$ be a filtered probability space and consider the following stochastic differential equation:
\begin{equation}\label{S3::eq:DNLD}
    \begin{dcases}
        dX_{t}=(-\beta \nabla U(X_{t})+Y_{t})dt + \sqrt{2\sigma_x^2}dB_t^x \\
        dY_{t}    =(-\gamma\nabla U(X_{t})-\alpha Y_{t})dt + \sqrt{2\sigma_y^2}dB_t^y,
    \end{dcases}
\end{equation}
where $(B^x,B^y)$ is a standard $2d$-dimensional Brownian motion, $(X_{0}, Y_{0})\sim \bmu_{0}$ for some initial distribution $\bmu_{0}$, and $ \alpha,\beta, \gamma, \sigma_x^2, \sigma_y^2>0$. As the drift coefficient is Lipschitz continuous, System \eqref{S3::eq:DNLD} has a unique solution \citep{friedman_stochastic_1975}. We denote the joint law of $(X_{t}, Y_{t})$ by $\bmu_{t}$, which has a twice continuously differentiable density with respect to the Lebesgue measure, as the drift coefficient has Lipschitz continuous and bounded gradient \citep{menozzi_density_2021}.

A slight variation of System \eqref{S3::eq:DNLD} has been studied previously in \cite{li_hessianfree_2022}, with similar motivations. The authors obtain convergence rates in the $W_2$ metric, in the strongly log-concave setting. As our study addresses the nonconvex setting, a comparative analysis falls outside the scope of this work.

\begin{remark}
System \eqref{S3::eq:DNLD} is equivalent to
\[
    dZ_t=-\begin{pmatrix}
        \beta/a & -1/b \\ \gamma/a & \alpha/b
    \end{pmatrix}\nabla H(Z_t)dt+2\begin{pmatrix}
        \sigma_x^2 & 0 \\ 0 & \sigma_y^2
    \end{pmatrix} dB_t,
\]
where $Z_t=(X_t, Y_t)$ and $H((x, y))=aU(x)+b\frac{\|y\|^2}{2}$. As such, System \eqref{S3::eq:DNLD} may be viewed as a preconditioned Langevin dynamics in a larger space on the function $H$. 

Moreover, we see the difference between System \eqref{S3::eq:DNLD} and the underdamped Langevin dynamics, through the presence of additional noise. 

Finally, as $\sigma_x^2, \sigma_y^2\to 0$, we recover the deterministic System \eqref{S1::eq:firstorder}, which exhibits accelerated convergence.
\end{remark}

We now study the convergence in $\KL$ divergence of System \eqref{S3::eq:DNLD}, similarly to what was done in \cite{ma_there_2021}. A byproduct of the proof (see Appendix \ref{SA1::proof:KL}) is the uniqueness of the invariant measure.

\begin{theorem}\label{S3::thm:KL}
Let $U\colon \R^d\to \R$, and let $a\ge a_0$ and $b, \alpha, \beta, \gamma, \sigma_x, \sigma_y>0$ satisfy
\begin{equation}\label{S3::eq:inv}
    a=\frac{\beta}{\sigma_x^2},\quad b=\frac{\alpha}{\sigma_y^2}\quad \text{and}\quad \frac{a}{b}=\gamma.
\end{equation}
\begin{enumerate}[leftmargin=*]
    \item System \eqref{S3::eq:DNLD} admits a weak solution $(X_{t},Y_{t})$ which has as invariant law $\bmu^{a,b}$.
    \item If $\bmu^{a,b}$ satisfies a log-Sobolev inequality with $\rho>0$, 
    \[
        \KL(\bmu_{t}\|\bmu^{a, b})\le \KL(\bmu_{0}\|\bmu^{a, b}) \cdot e^{-2\rho \min(\sigma_x^2, \sigma_y^2)t}.
    \]
    In specific, provided $\sigma_x^2, \sigma_y^2>0$, we obtain $\KL(\bmu_{t}\|\bmu^{a, b})\to 0$ at exponential rate as $t\to \infty$, and the invariant law $\bmu^{a,b}$ is unique.
\end{enumerate}
\end{theorem}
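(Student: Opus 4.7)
The plan is to first verify that the three parameter relations \eqref{S3::eq:inv} are exactly what makes $\bmu^{a,b}$ stationary for the Fokker--Planck equation associated with \eqref{S3::eq:DNLD}, and then derive a differential inequality for the relative entropy whose integration via Grönwall yields the stated exponential rate. Existence of a solution is already in hand since the drift is Lipschitz, so Part 1 reduces to verifying invariance.

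For invariance, I would write out the Fokker--Planck operator $\mathcal{L}^*$ for \eqref{S3::eq:DNLD}, substitute $\bmu^{a,b}(x,y) \propto \exp(-aU(x) - b\|y\|^2/2)$ using $\nabla_x \log \bmu^{a,b} = -a\nabla U(x)$ and $\nabla_y \log \bmu^{a,b} = -by$, and group the resulting terms by monomial type ($\Delta U$, $\|\nabla U\|^2$, $y\cdot \nabla U$, $\|y\|^2$, constants). The coefficients of each group vanish precisely under the conditions $\beta = a\sigma_x^2$, $\alpha = b\sigma_y^2$, and $a = b\gamma$, i.e.\ exactly \eqref{S3::eq:inv}.

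For the entropy estimate, I would split the drift into a reversible and an antisymmetric piece: $b = b_{\mathrm{rev}} + b_{\mathrm{irr}}$ with $b_{\mathrm{rev}}(x,y) = (-\beta \nabla U(x), -\alpha y)$ and $b_{\mathrm{irr}}(x,y) = (y, -\gamma \nabla U(x))$. With $H(x,y) = aU(x) + b\|y\|^2/2$ and $\Sigma = \operatorname{diag}(\sigma_x^2 I, \sigma_y^2 I)$, one checks that the reversible part equals $-\Sigma \nabla H$, while a direct computation using $a = b\gamma$ gives $\nabla \cdot (b_{\mathrm{irr}}\, \bmu^{a,b}) = 0$. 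The Fokker--Planck equation can then be recast as
\[
    \partial_t \bmu_t = \nabla \cdot\!\bigl[\Sigma\, \bmu_t\, \nabla \log(\bmu_t/\bmu^{a,b})\bigr] - \nabla \cdot (b_{\mathrm{irr}}\, \bmu_t).
\]
Differentiating $\KL(\bmu_t \| \bmu^{a,b})$ in time, the irreversible piece vanishes after one integration by parts, because for $f = \bmu_t/\bmu^{a,b}$ the identity $\log f\, \nabla f = \nabla(f\log f - f)$ reduces its contribution to $\int (f\log f - f)\, \nabla\cdot(b_{\mathrm{irr}}\, \bmu^{a,b})\, dz = 0$. The reversible piece produces a weighted Fisher information, so
\[
    \frac{d}{dt}\KL(\bmu_t \| \bmu^{a,b}) = -\int \!\bigl[\sigma_x^2 \|\nabla_x \log(\bmu_t/\bmu^{a,b})\|^2 + \sigma_y^2 \|\nabla_y \log(\bmu_t/\bmu^{a,b})\|^2\bigr] \bmu_t(dz).
\]

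Bounding this above by $-\min(\sigma_x^2,\sigma_y^2)\, \Fi(\bmu_t \| \bmu^{a,b})$ and invoking the assumed log-Sobolev inequality $\Fi(\bmu_t\|\bmu^{a,b}) \ge 2\rho\, \KL(\bmu_t\|\bmu^{a,b})$ gives $\frac{d}{dt}\KL \le -2\rho\min(\sigma_x^2,\sigma_y^2)\KL$, and Grönwall concludes. Uniqueness of the invariant measure follows at once, since any other invariant $\bnu$ would have $\KL(\bnu\|\bmu^{a,b})$ both constant in $t$ and exponentially decaying, hence zero. The main obstacle I anticipate is the rigorous justification of the integration by parts and the interchange of time derivative with the spatial integral: this requires enough regularity and decay of $\bmu_t$ at infinity for boundary terms to drop, which is standard under the smoothness assumptions on $U$ and the cited density regularity for SDEs with Lipschitz drift. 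The algebraic heart of the argument---the reversible/antisymmetric splitting and the resulting cancellation---is the key insight, after which the conclusion is essentially immediate.
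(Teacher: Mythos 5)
Your proposal is correct and takes essentially the same route as the paper's: direct Fokker--Planck verification for invariance, then showing the antisymmetric (irreversible) part of the dynamics contributes nothing to entropy dissipation, followed by the log-Sobolev bound and Grönwall. The only difference is presentational: the paper writes the continuity equation with a single vector field $v_t=-M\nabla\log(\bmu_t/\bmu^{a,b})$ where $M=\begin{pmatrix}\sigma_x^2&-1/b\\1/b&\sigma_y^2\end{pmatrix}$, applies its Lemma for $\frac{d}{dt}\KL$, and drops the antisymmetric part of $M$ in the resulting quadratic form, whereas you split the drift into reversible ($-\Sigma\nabla H$) and irreversible ($b_{\mathrm{irr}}$ with $\nabla\cdot(b_{\mathrm{irr}}\bmu^{a,b})=0$) pieces and kill the irreversible contribution by an explicit integration by parts---two equivalent bookkeepings for the same cancellation.
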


\subsection{Discrete-Time Study}\label{S4::sec:discrete}

Consider the following discretization of System \eqref{S3::eq:DNLD}:
\begin{equation}\label{S4::eq:DNLA}
\begin{dcases}
    d\t X_{t} = (-\beta \nabla U(\t X_{kh})+\t Y_{t})dt+\sqrt{2\sigma_x^2}d B^x_t \\
    d\t Y_{t} = (-\gamma \nabla U(\t X_{kh})-\alpha \t Y_{t})dt+\sqrt{2\sigma_y^2}d B^y_t,
\end{dcases}
\end{equation}
for $t\in [kh, (k+1)h]$, where $h>0$ is the step-size and $\t\bmu_{0}$ is the initial distribution, such that $(\t X_0, \t Y_0)\sim \t \bmu_0$. We define $\t\bmu_t=\mathcal{L}((\t X_t, \t Y_t))$. 

Conditionally on $(\t X_{kh}, \t Y_{kh})$, System \eqref{S4::eq:DNLA} describes an Ornstein-Uhlenbeck process for $t\in [kh, (k+1)h]$. We may thus simulate $\t \bmu_{(k+1)h}$ by sampling an appropriate Gaussian random variable. The explicit computations leading to Algorithm \ref{S4::alg:DNLA} are presented in Appendix \ref{SA2::comp:alg}.

\begin{algorithm}[ht]
\caption{High-Resolution Sampling Algorithm}\label{S4::alg:DNLA}
\begin{algorithmic}[1]
\REQUIRE An initial distribution $\t\bmu_{0}\in \mathcal P(\R^{2d})$.
\STATE Simulate $(\t X_{0}, \t Y_{0})\sim \t\bmu_{0}$.
\FOR{$k=0, \ldots, K-1$}
    \STATE Generate $(\t X_{(k+1)h}, \t Y_{(k+1)h})\sim \mathcal N(m, \Sigma)$ conditionally on $(\t X_{kh}, \t Y_{kh})$, where 
    \[
    \begin{dcases}
        & m_X = \t X_{kh} - \beta h\nabla U(\t X_{kh}) + \frac{1-e^{-\alpha h}}{\alpha}\t Y_{kh}  - \frac{\gamma}{\alpha}\left(h-\frac{1-e^{-\alpha h}}{\alpha}\right)\nabla U(\t X_{kh}) \\
        & m_Y = e^{-\alpha h}\t Y_{kh}-\frac{\gamma}{\alpha}(1-e^{-\alpha h})\nabla U(\t X_{kh}) \\
        & \Sigma_{XX} = \frac{\sigma_y^2}{\alpha^3}\left[2\alpha h-e^{-2\alpha h}+4e^{-\alpha h}-3\right]\cdot I_d+2\sigma_x^2h\cdot I_d \\
        & \Sigma_{YY}=\frac{\sigma_y^2(1-e^{-2\alpha h})}{\alpha}\cdot I_{d} \\
        & \Sigma_{XY}=\Sigma_{YX} = \frac{\sigma_y^2(1-e^{-\alpha h})^2}{\alpha^2}\cdot I_d.
    \end{dcases}
    \]
\ENDFOR \\
\RETURN{$(\t X_{Kh}, \t Y_{Kh})$}.
\end{algorithmic}
\end{algorithm}

Our result, as well as our analysis, is comparable to \cite{vempala_rapid_2019}, who studied the highly overdamped Langevin Dynamics. The more precise result and its derivation are given in Appendix \ref{SA2::ssec:33}.

\begin{theorem}\label{S4::thm:KL}
    Let $\varepsilon>0$, $a\ge a_0$, $b> 0$ and assume \eqref{S3::eq:inv} holds. If a log-Sobolev inequality on $\bmu^{a, b}$ with parameter $\rho>0$ holds, and $h\lesssim\mathcal O(\rho)$, then there exist 
    \[
        \t A=\mathcal O(\rho), \quad \t B=\mathcal O(a^2d/\rho)\quad \text{and}\quad \hat B=\mathcal O(a^2d),
    \]
    such that 
    \[
        \KL(\t \bmu_h\|\bmu^{a, b})\le e^{-\t Ah}\KL(\t\bmu_0\|\bmu^{a,b})+\hat Bh^2,
    \]
    and, for all $K\ge 1$, 
    \begin{equation}\label{S4::eq:KL}
        \KL(\t\bmu_{Kh}\|\bmu^{a,b})\le e^{-\t AKh}\KL(\t\bmu_0\|\bmu^{a,b})+\t Bh.
    \end{equation}
\end{theorem}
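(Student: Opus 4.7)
The plan is to adapt the entropy-method analysis of \cite{vempala_rapid_2019} to the joint $(X,Y)$ dynamics. On each interval $[kh,(k+1)h]$, System \eqref{S4::eq:DNLA} is a linear SDE (an Ornstein--Uhlenbeck process with frozen drift depending only on $(\t X_{kh}, \t Y_{kh})$), so the marginal law $\t\bmu_t$ satisfies an explicit Fokker--Planck equation. Writing $H(t) := \KL(\t\bmu_t \| \bmu^{a,b})$ and differentiating along this FPE, after standard integration by parts I would obtain a decomposition of the form
\[
H'(t) = -\sigma_x^2\, \E_{\t\bmu_t}\!\left[\left\|\nabla_x \log\tfrac{\t\bmu_t}{\bmu^{a,b}}\right\|^2\right] - \sigma_y^2\, \E_{\t\bmu_t}\!\left[\left\|\nabla_y \log\tfrac{\t\bmu_t}{\bmu^{a,b}}\right\|^2\right] + \mathrm{Err}(t),
\]
where $\mathrm{Err}(t)$ encodes the mismatch between the true gradient $\nabla U(\t X_t)$ (which would make $\bmu^{a,b}$ invariant by Theorem \ref{S3::thm:KL}) and the frozen gradient $\nabla U(\t X_{kh})$ appearing in \eqref{S4::eq:DNLA}.

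Next I would bound $\mathrm{Err}(t)$ via Young's inequality, splitting it into a small multiple of the two partial Fisher informations (absorbed by the dissipation above) plus a ``pure discretization'' term of the form $C\, L^2\, \E[\|\t X_t - \t X_{kh}\|^2]$, with $C$ polynomial in $\beta,\gamma,\sigma_x,\sigma_y$. Since the process is explicitly solvable on $[kh,(k+1)h]$, a direct calculation from Algorithm \ref{S4::alg:DNLA} yields
\[
\E[\|\t X_t - \t X_{kh}\|^2] \lesssim \left(\beta^2\, \E\|\nabla U(\t X_{kh})\|^2 + \E\|\t Y_{kh}\|^2\right)(t-kh)^2 + \sigma_x^2\, d\, (t-kh),
\]
in which the leading Brownian term is linear in $h$. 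The moments $\E\|\nabla U(\t X_{kh})\|^2$ and $\E\|\t Y_{kh}\|^2$ can be controlled uniformly in $k$ by exploiting $L$-smoothness, the linear growth of $\nabla U$, and the dissipative drift of \eqref{S4::eq:DNLA}, which prevents blow-up of the second moments across iterations.

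Combining these ingredients and invoking the log-Sobolev inequality through $\Fi(\t\bmu_t\|\bmu^{a,b}) \ge 2\rho\, H(t)$, together with $\min(\sigma_x^2,\sigma_y^2)\, \Fi \le \sigma_x^2\, \Fi_x + \sigma_y^2\, \Fi_y$, I obtain a differential inequality of the form $H'(t) \le -\t A\, H(t) + \hat B\, h$ with $\t A = 2\rho\min(\sigma_x^2,\sigma_y^2) = \mathcal{O}(\rho)$ under \eqref{S3::eq:inv}. Gronwall's lemma on $[0,h]$ yields the one-step bound $H(h) \le e^{-\t A h} H(0) + \hat B h^2$. Iterating over $K$ steps and summing the resulting geometric series, with $h \lesssim \rho$ ensuring $1-e^{-\t A h} \gtrsim \t A h$, gives $\KL(\t\bmu_{Kh}\|\bmu^{a,b}) \le e^{-\t A Kh} H(0) + \hat B h / \t A$, so that $\t B = \hat B/\t A = \mathcal{O}(a^2 d / \rho)$.

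The main obstacle I anticipate is the bookkeeping: carefully tracking which cross terms in the entropy derivative get absorbed by the two partial Fisher informations (in particular the $\gamma$-coupling term in the $x$-drift), and collapsing the constants $\alpha,\beta,\gamma$, which are tied to $a,b,\sigma_x^2,\sigma_y^2$ by \eqref{S3::eq:inv}, to the stated scalings $\t A = \mathcal{O}(\rho)$ and $\hat B = \mathcal{O}(a^2 d)$. A second delicate point is establishing the uniform-in-$k$ second-moment bounds on $(\t X_{kh}, \t Y_{kh})$ needed to control $\hat B$; here I would rely on a Lyapunov argument for the OU-type iteration in $(x,y)$, using that the frozen drift is globally Lipschitz.
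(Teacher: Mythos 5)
Your plan follows the Vempala--Wibisono interpolation/entropy strategy that the paper also uses: interpolate the discrete scheme via System \eqref{S4::eq:DNLA}, write a Fokker--Planck-type equation (noting, as you do, that the frozen drift $\nabla U(\t X_{kh})$ makes the interpolant non-Markovian, so the FPE involves a conditional expectation of the drift; this is the paper's Lemma \ref{SA2::lem:conditional}), differentiate the KL divergence, split off the discretization error by Young's inequality, and close with the log-Sobolev inequality, Gr\"onwall, and a geometric sum. The cancellation of the antisymmetric part of the drift matrix so that only $\diag(\sigma_x^2,\sigma_y^2)$ survives in the Fisher-information dissipation is also correctly anticipated. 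Up to this point you are on the paper's exact track, and the decomposition $\E[\|\t X_t-\t X_{kh}\|^2]\lesssim(\beta^2\E\|\nabla U(\t X_{kh})\|^2+\E\|\t Y_{kh}\|^2)(t-kh)^2+\sigma_x^2 d(t-kh)$ is morally what the paper derives.

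Where you genuinely diverge is in how the moments $\E\|\nabla U(\t X_{kh})\|^2$ and $\E\|\t Y_{kh}\|^2$ are controlled. You propose a uniform-in-$k$ Lyapunov/drift argument. The paper instead bounds both moments at the start of each step by $W_2^2(\t\bmu_{kh},\bmu^{a,b})$ plus the stationary moments (computed exactly by integration by parts, giving $\E_{\bmu^{a,b}}\|\nabla U\|^2\le dL/a$ and $\E_{\bmu^{a,b}}\|Y\|^2\le d/b$), then converts $W_2^2(\t\bmu_{kh},\bmu^{a,b})\le(2/\rho)\KL(\t\bmu_{kh}\|\bmu^{a,b})$ by Talagrand's inequality. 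The $W_2^2$ contribution is therefore absorbed into the exponential KL decay at the cost of halving the rate (for $h$ small), while only the stationary moments end up in $\hat B$; this is why the paper's $\hat B=\mathcal O(a^2d)$ depends solely on the problem data and not on the initial distribution. Your Lyapunov route, by contrast, would either bake initial-moment contributions into $\hat B$ or require an extra argument to show they are transient; it is workable but adds a lemma and gives a slightly less clean constant. Finally, a small bookkeeping discrepancy: Young's inequality and the Talagrand absorption each cost a factor of $2$, so the paper's rate is $\t A=\tfrac12\rho\min(\sigma_x^2,\sigma_y^2)$ rather than your stated $2\rho\min(\sigma_x^2,\sigma_y^2)$; this does not affect the $\mathcal O(\rho)$ scaling, but is worth tracking.
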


As an immediate corollary we obtain sufficient conditions to obtain an $\varepsilon$-accurate sample.

\begin{corollary}\label{S4::coro:KL}
    Let $\varepsilon>0$, $a\ge a_0$, $b> 0$ and assume \eqref{S3::eq:inv} holds. If a log-Sobolev inequality on $\bmu^{a, b}$ with parameter $\rho>0$ holds, then $\KL(\t\bmu_{Kh}\|\bmu^{a, b})\le \varepsilon$ for
    \[
        h\lesssim \t{\mathcal O}\left(\frac{\rho \varepsilon}{a^2 d}\right) 
        \quad \text{and}\quad 
        K\gtrsim \t {\mathcal O}\left(\frac{da^2}{\rho^2\varepsilon}\right).
    \]
\end{corollary}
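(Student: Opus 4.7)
The plan is to apply Theorem \ref{S4::thm:KL} directly and balance the two terms on the right-hand side of the bound
\[
    \KL(\t\bmu_{Kh}\|\bmu^{a,b})\le e^{-\t AKh}\KL(\t\bmu_0\|\bmu^{a,b})+\t Bh,
\]
with $\t A=\mathcal O(\rho)$ and $\t B=\mathcal O(a^2d/\rho)$. I would split the target error as $\varepsilon/2+\varepsilon/2$ and force each term to sit below $\varepsilon/2$.

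First, to kill the discretization bias, I would pick the step size $h$ so that $\t Bh\le \varepsilon/2$. Solving for $h$ yields
\[
    h\le \frac{\varepsilon}{2\t B}=\mathcal O\!\left(\frac{\rho\varepsilon}{a^2d}\right),
\]
which is consistent with (and at most as restrictive as) the standing requirement $h\lesssim \mathcal O(\rho)$ from Theorem \ref{S4::thm:KL} for $\varepsilon$ small enough. This gives the first claimed bound on $h$.

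Next, to make the contraction term $e^{-\t AKh}\KL(\t\bmu_0\|\bmu^{a,b})\le\varepsilon/2$, I would take logarithms and solve for $Kh$, obtaining
\[
    Kh\ge \frac{1}{\t A}\log\!\left(\frac{2\KL(\t\bmu_0\|\bmu^{a,b})}{\varepsilon}\right).
\]
Substituting the chosen $h=\Theta(\rho\varepsilon/(a^2d))$ and using $\t A=\mathcal O(\rho)$ gives
\[
    K\gtrsim \frac{1}{\t A h}\log\!\left(\frac{2\KL(\t\bmu_0\|\bmu^{a,b})}{\varepsilon}\right)=\tilde{\mathcal O}\!\left(\frac{da^2}{\rho^2\varepsilon}\right),
\]
where the $\tilde{\mathcal O}$ absorbs the $\log(1/\varepsilon)$ and $\log\KL(\t\bmu_0\|\bmu^{a,b})$ factors, matching the statement.

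There is no real obstacle here: the result is a routine rearrangement of Theorem \ref{S4::thm:KL}, since that theorem already packages the nontrivial contraction-plus-bias analysis. The only mild subtlety is checking that the chosen $h$ still meets the hypothesis $h\lesssim \mathcal O(\rho)$, which holds automatically for small $\varepsilon$ because $\rho\varepsilon/(a^2d)\le \rho$ whenever $\varepsilon\le a^2d$, a range easily assumed without loss of generality.
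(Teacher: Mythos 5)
Your proposal is correct and follows essentially the same approach as the paper, whose proof is the one-liner ``bound each term in Equation \eqref{S4::eq:KL} by $\varepsilon/2$''; you have simply carried out the arithmetic explicitly and also sensibly verified that the resulting step size is compatible with the hypothesis $h\lesssim\mathcal O(\rho)$ from Theorem \ref{S4::thm:KL}.
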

\begin{proof}
    Bound each term in Equation \eqref{S4::eq:KL} by $\varepsilon/2$.
\end{proof}

We are now ready to complement Theorem \ref{S5::thm:global}, by replacing the oracle algorithm by Algorithm \ref{S4::alg:DNLA}.

\begin{corollary}\label{S5::coro:global_DNLA}
Let $U\colon \R^d\to \R$ satisfy Assumptions \ref{S2b::ass:LSI} and \ref{S2b::ass:hasen}, and fix $\varepsilon\in (0, 1/2)$ and $\delta\in (0, 1)$. Suppose \eqref{S5::eq:bounds} holds, and that $\t X$ is simulated according to Algorithm \ref{S5::alg:global} with $K$ iterations of Algorithm \ref{S4::alg:DNLA} being used for the oracle sub-procedure. It holds that 
\[
    \Pr(U(\t X)-U^*\ge \varepsilon)\le \delta, 
    \quad \text{for}\quad 
    h\lesssim \t{\mathcal O}\left(\dfrac{\rho_{a, b}\varepsilon^2}{a^2d}\right)
    \quad \text{and}\quad 
    K\gtrsim \tilde{\mathcal O}\left(\frac{da^2}{\varepsilon^2\rho_{a,b}^2}\right).
\]
\end{corollary}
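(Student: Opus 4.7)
The plan is to combine the sampling guarantee of Corollary \ref{S4::coro:KL} with the global optimization result of Theorem \ref{S5::thm:global}, using the $x$-marginal of the high-resolution Langevin sample as the oracle. Algorithm \ref{S4::alg:DNLA} produces samples $(\t X_{Kh}, \t Y_{Kh})\sim \t\bmu_{Kh}$ in $\R^{2d}$ approximating $\bmu^{a,b}$, whereas Theorem \ref{S5::thm:global} requires samples in $\R^d$ whose law is close to $\bmu^a$. The bridge is that $\bmu^a$ is by definition the $x$-marginal of $\bmu^{a,b}$.

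First, I would verify the log-Sobolev hypothesis needed to invoke Corollary \ref{S4::coro:KL}: by Remark \ref{S2b::rem:LSIa0}, Assumption \ref{S2b::ass:LSI} combined with the Gaussian marginal in $y$ ensures that $\bmu^{a,b}$ satisfies a log-Sobolev inequality with constant $\rho_{a,b}>0$. I would then apply Corollary \ref{S4::coro:KL} with target accuracy $\varepsilon^2/18$ in place of the generic $\varepsilon$ appearing there. Substituting into the scalings $h\lesssim \t{\mathcal O}(\rho_{a,b}\varepsilon/(a^2 d))$ and $K\gtrsim \t{\mathcal O}(d a^2/(\rho_{a,b}^2\varepsilon))$ of that corollary yields exactly the bounds
\[
    h\lesssim \t{\mathcal O}\!\left(\dfrac{\rho_{a, b}\varepsilon^2}{a^2d}\right)\quad \text{and}\quad K\gtrsim \t{\mathcal O}\!\left(\dfrac{da^2}{\varepsilon^2\rho_{a,b}^2}\right)
\]
appearing in the statement, and guarantees $\KL(\t\bmu_{Kh}\|\bmu^{a,b})\le \varepsilon^2/18$.

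Second, I would invoke the monotonicity of the KL divergence under marginalization (data-processing inequality): denoting by $\t\bmu_{Kh}^x$ the $x$-marginal of $\t\bmu_{Kh}$, we have
\[
    \KL(\t\bmu_{Kh}^x\|\bmu^a)\le \KL(\t\bmu_{Kh}\|\bmu^{a,b})\le \frac{\varepsilon^2}{18},
\]
so $\t\bmu\coloneqq \t\bmu_{Kh}^x$ satisfies hypothesis \eqref{S5::hyp:smallEV}. Under the bounds \eqref{S5::eq:bounds} on $a$ and $N$, Theorem \ref{S5::thm:global} applied with this oracle then yields the desired conclusion $\Pr(U(\t X)-U^*\ge \varepsilon)\le \delta$.

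The argument is essentially a plug-and-chain-rule composition, so there is no substantive obstacle; the only point requiring care is the dimensional bookkeeping between the ambient $\R^{2d}$ of the high-resolution Langevin dynamics and the $\R^d$ in which optimization takes place, which the data-processing inequality handles cleanly without degrading the rates.
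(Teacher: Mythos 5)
Your proposal is correct, and it fills in the details of what the paper leaves implicit (the paper states the corollary without a separate proof, treating it as the composition of Theorem \ref{S5::thm:global} with Corollary \ref{S4::coro:KL}). The substitution $\varepsilon\mapsto\varepsilon^2/18$ in the rates of Corollary \ref{S4::coro:KL} produces exactly the claimed scalings (with the constant $18$ absorbed into the $\t{\mathcal O}$), and the log-Sobolev constant $\rho_{a,b}$ for $\bmu^{a,b}$ is justified by Remark \ref{S2b::rem:LSIa0}, as you note.

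The one step that genuinely requires care, and which you handle correctly, is the passage from the joint law on $\R^{2d}$ to the $x$-marginal on $\R^d$. Since $\bmu^{a,b}$ factorizes as $\bmu^a$ times an independent Gaussian in $y$, its $x$-marginal is exactly $\bmu^a$, and the data-processing inequality (monotonicity of KL under marginalization, or equivalently the nonnegativity of the conditional term in the chain rule) gives
\[
\KL(\t\bmu_{Kh}^x\|\bmu^a)\le\KL(\t\bmu_{Kh}\|\bmu^{a,b})\le\frac{\varepsilon^2}{18},
\]
so the $x$-component of Algorithm \ref{S4::alg:DNLA}'s output satisfies hypothesis \eqref{S5::hyp:smallEV}. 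This is the right bridge, and it is the only nonobvious ingredient; the paper does not spell it out, so your proof is, if anything, a small improvement in transparency over the original presentation.
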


\begin{remark}
    As in Remark \ref{S5::rem:eps}, the results in Corollary \ref{S5::coro:global_DNLA} immediately extend to any $\varepsilon>0$.
\end{remark}

\begin{remark}
    It may be observed that the rates presented in Corollary \ref{S5::coro:global_DNLA} do not improve upon the best-known rates for the overdamped Langevin dynamics \citep{vempala_rapid_2019} or the underdamped Langevin dynamics \citep{ma_there_2021}. However, this does not imply the absence of acceleration; rather, it reflects that our current analytical framework is not sufficiently tight to capture it. This is comparable to the initial studies on high-resolution differential equations which similarly did not demonstrate provable acceleration, yet laid the groundwork for the remarkable advancements in acceleration techniques that we benefit from today.
\end{remark}

\section{Numerical Results}\label{S6::sec:numerical}
All experiments have been performed in Python 3.8. The code is available on the author's GitHub page.\footnote{\ifpreprintelse{Code in supplementary material. Link will be provided in final version.}{\url{https://github.com/DanielCortild/Global-Optimization}}} Our theoretical study does not answer the question of the optimality of the parameter choice, which we leave for future work. Given $a>0$, we fix $\alpha=1$, $\beta=1$, $b=10$, $\gamma=a/10$, $\sigma_x^2=1/a$ and $\sigma_y^2=0.1$. The remaining parameters, namely the number of samples $N$, the number of iterations $K$ and the step-size $h$, will vary with the experiments. The number of runs over which we compute empirical probabilities is denoted by $M$.

We illustrate the convergence of our algorithm on the Rastrigin function, a classical example of a highly multimodal function with regularly distributed local minima.
Let $U\colon \R^{d}\to \R$ be given by 
\begin{equation*}\label{S6::eq:rastringin}
    U(x)=d+{\|x\|^2}-\sum_{i=1}^d\cos(2\pi x_i),
\end{equation*}
which is minimized in $x^*=(0, \ldots, 0)\in \R^d$, with objective value $U^*=U(x^*)=0$. The Rastrigin function for $d=1$ and $d=2$ is plotted in Figure \ref{S6::fig:rastringin} of Appendix \ref{SA:Figures} for illustrative purposes. The Gibbs measure of the Rastrigin function satisfies a log-Sobolev inequality by Remark \ref{S2b::rem:LSIa0}, and it is easy to see it also satisfies Assumption \ref{S2b::ass:hasen}. We select $d=10$ for all the experiments, unless otherwise specified.

In Figure \ref{S6::fig:empirical_probability}, we show the empirical probabilities computed over $M=100$ runs that $U(\t X_{k})-U^*\ge \varepsilon$, for various thresholds $\varepsilon$. In each run, a step-size $h=0.01$, a sample number $N=10$ and a maximal number of iterations $K=14000$ have been chosen. The initial distribution is set to $\t\bmu_0=\mathcal N(3\cdot \mathbf 1_d, 10\cdot I_{d\times d})$. We observe that for smaller values of $a$, $\bmu^a$ is not representative enough of $\bmu^*$ to guarantee the wanted threshold, even after numerous iterations. For larger $a$, the probability converges, with a rate that decreases as $a$ increases. This is expected, as $\bmu^a$ approaches $\bmu^*$ as $a$ increases, but the number of iterations to reach a good estimate of $\bmu^a$ also increases as $a$ increases. These observations qualitatively confirm Corollary \ref{S5::coro:global_DNLA}\daniel{, as well as the dependency in $a$ of $\rho_{a, b}$ as outlined in Remark \ref{S2b::rem:LSIa0}}.

\begin{figure}[ht]
    \centering
    \includegraphics[width=\linewidth]{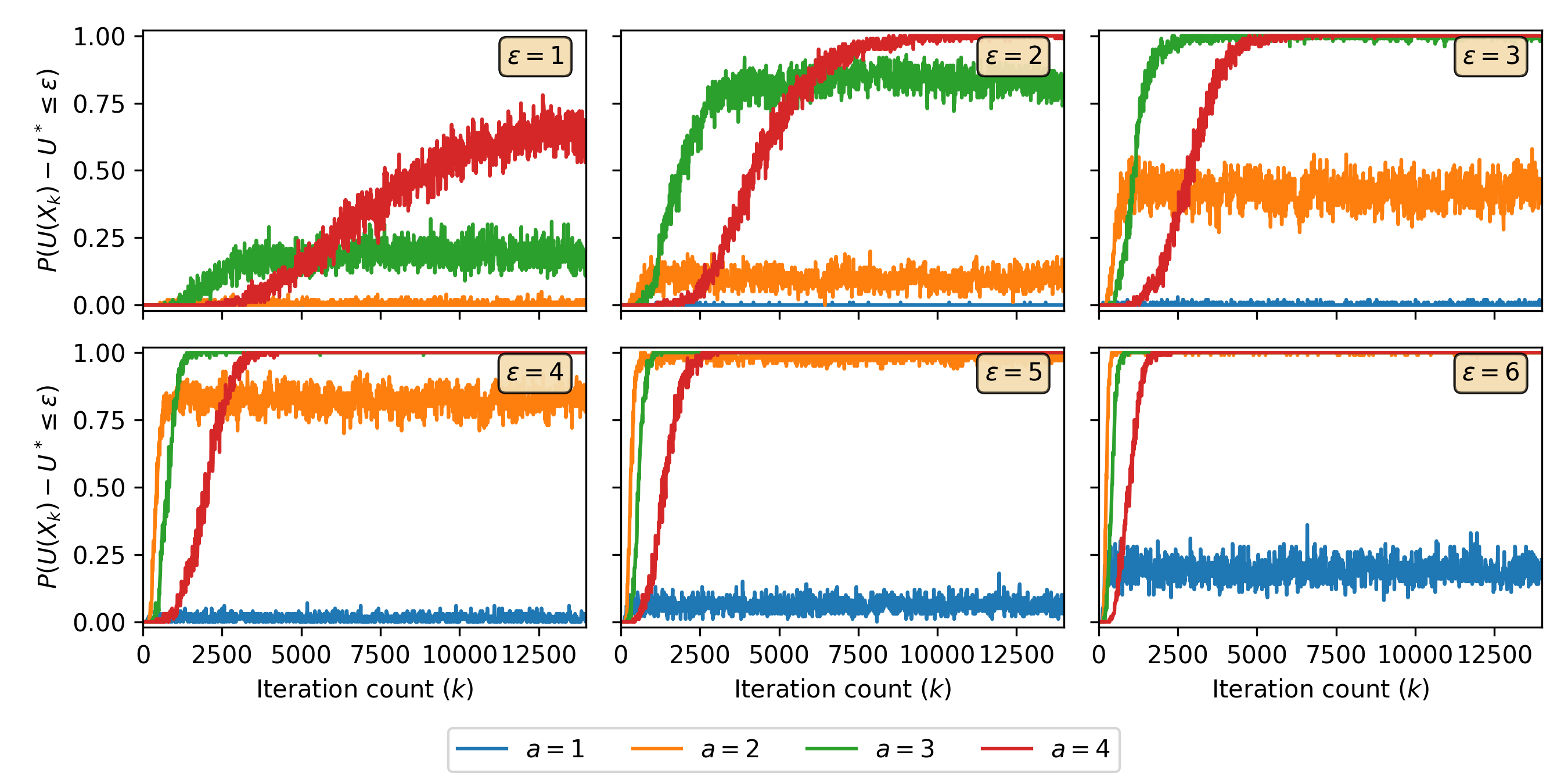}
    \caption{\daniel{Empirical Probabilities of Hitting Tolerance $\varepsilon$ for 14000 Iterations.}}
    \label{S6::fig:empirical_probability}
\end{figure}

Table \ref{S6::tab:stepsize} shows similar results in a tabular form, for various values of the step-size $h$. We report the average, median and standard deviation of $M=100$ runs after $K=14000$ iterations. The small standard deviation showcases the robustness of our method. These results motivate our selection of step-size for the subsequent experiments.

\begin{table}[ht]
    \caption{\daniel{Average, Median and Standard Deviation of Objective Function Value for Various Step-Sizes.}}
    \begin{center}
    {\small\begin{tabular}{|c|c|cccc|} \hline
        $h$ &                & $a=1$ & $a=2$ & $a=3$ & $a=4$\\ \hline
        $0.001$ & Avg    & 2.974  & 0.920  & 1.363  & 4.216 \\ 
        $0.001$ & Med & 3.034  & 0.895  & 1.427  & 4.291 \\
        $0.001$ & SD      & 0.595 & 0.334 & 0.617  & 0.906 \\ \hline
        $0.01$ & Avg   & 3.422  & 0.949  & 0.425  & 0.318 \\ 
        $0.01$ & Med & 3.488  & 0.960  & 0.422  & 0.329 \\
        $0.01$ & SD   & 0.599  & 0.257  & 0.101  & 0.076 \\ \hline
        $0.1$ & Avg   & 7.840  & 6.970  & 6.780  & 6.570 \\ 
        $0.1$ & Med & 7.977  & 7.168  & 6.894  & 6.712 \\
        $0.1$ & SD   & 1.031  & 0.959  & 0.842  & 0.766 \\ \hline
    \end{tabular}}
    \end{center}
    \label{S6::tab:stepsize}
\end{table}

As previously noted, Algorithm \ref{S5::alg:global} achieves convergence for any sampling algorithm that satisfies the conditions outlined in Theorem \ref{S5::thm:global}. In particular, discretizations of both overdamped and underdamped Langevin dynamics are anticipated to be viable candidates. In Figure \ref{S6::fig:comp}, we compare the high-resolution Langevin algorithm (HRLA) proposed in Algorithm \ref{S4::alg:DNLA} with the overdamped Langevin algorithm (OLA) from \cite{vempala_rapid_2019} and the underdamped Langevin algorithm (ULA) from \cite{ma_there_2021}, using comparable parameter settings. The values of $a$ are selected empirically to optimize the convergence rate for a given value of $\varepsilon$, with convergence assessed by the speed at which the empirical probability reaches $1$, which reflects the accuracy of the invariant measure and does not depend on the sampling algorithm. Hence the chosen pairs $(a, \varepsilon)$ are common to all three algorithms.

\begin{figure}[ht]
    \centering
    \includegraphics[width=.55\linewidth]{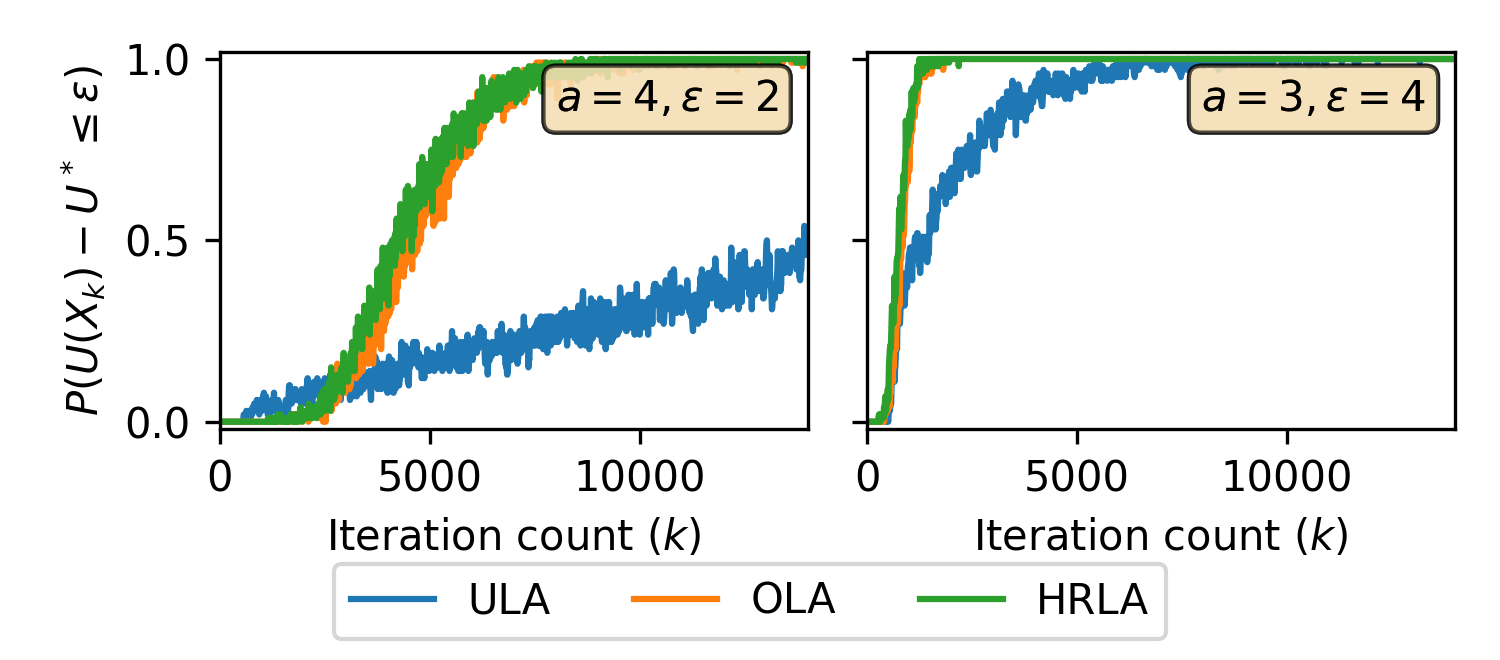}
    \caption{Comparison of Different Sampling Methods.}
    \label{S6::fig:comp}
\end{figure}

Extending the results of Figure \ref{S6::fig:empirical_probability} to a larger number of iterations $(K=100000)$ with larger values of $a$ allows us to reach better accuracies. This is shown in Figure \ref{S6::fig:prob_K100000}, in which we observe the same trends as in Figure \ref{S6::fig:empirical_probability}.

\begin{figure}[ht]
    \centering
    \includegraphics[width=\linewidth]{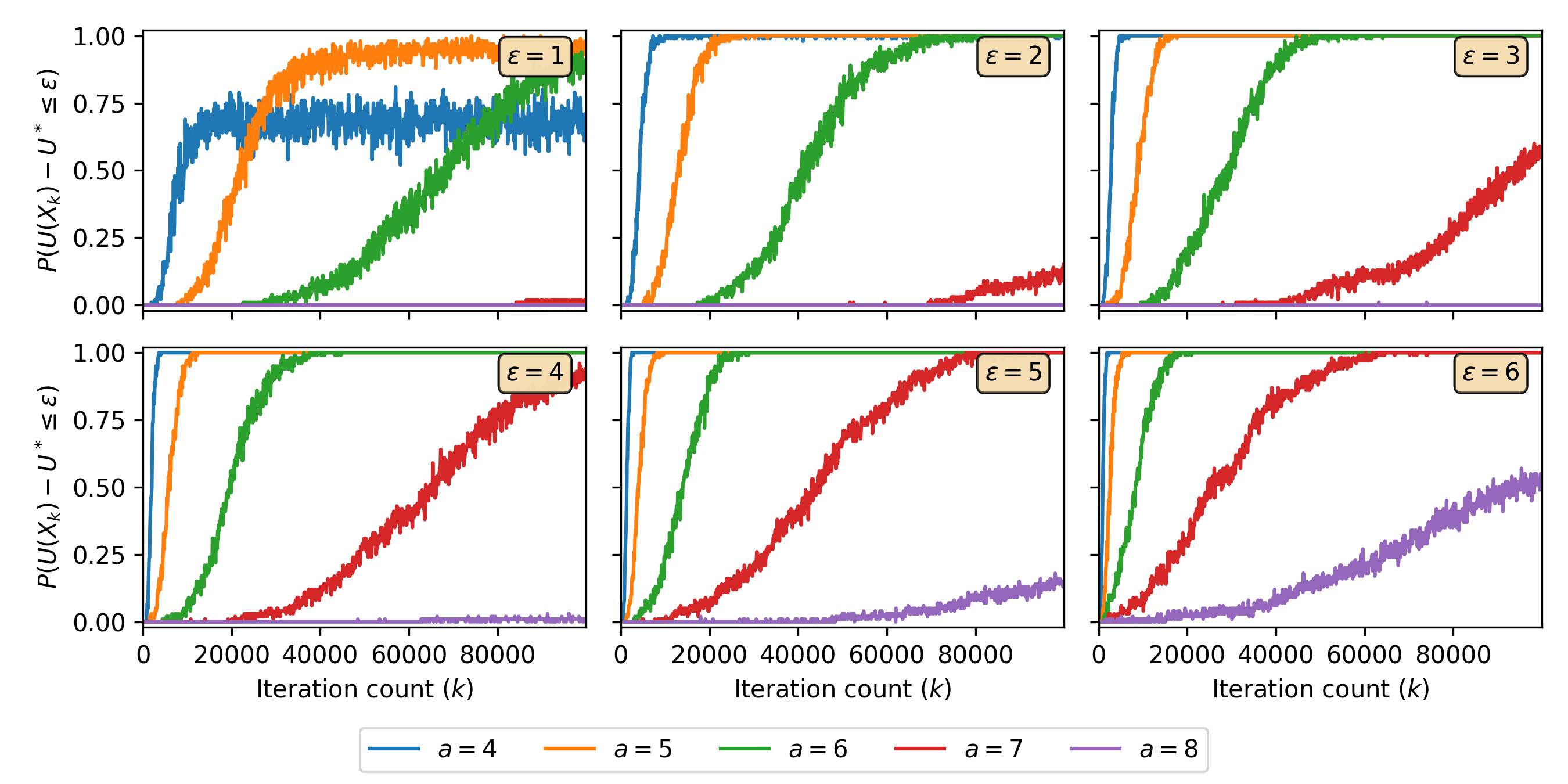}
    \caption{\daniel{Empirical Probabilities of Hitting Tolerance $\varepsilon$ for 100000 Iterations.}}
    \label{S6::fig:prob_K100000}
\end{figure}

The impact of the problem’s dimensionality can also be analyzed. In Figure \ref{S6::fig:dim}, we compare the cases of $d=10$ with $\varepsilon=2$ and $d=20$ with $\varepsilon=4$. The observed trends are remarkably similar, which aligns with theoretical expectations: doubling the dimension proportionally the expected initial objective value, and consequently, doubling the tolerance maintains the relative error at a consistent level.

\begin{figure}[ht]
    \centering
    \includegraphics[width=.55\linewidth]{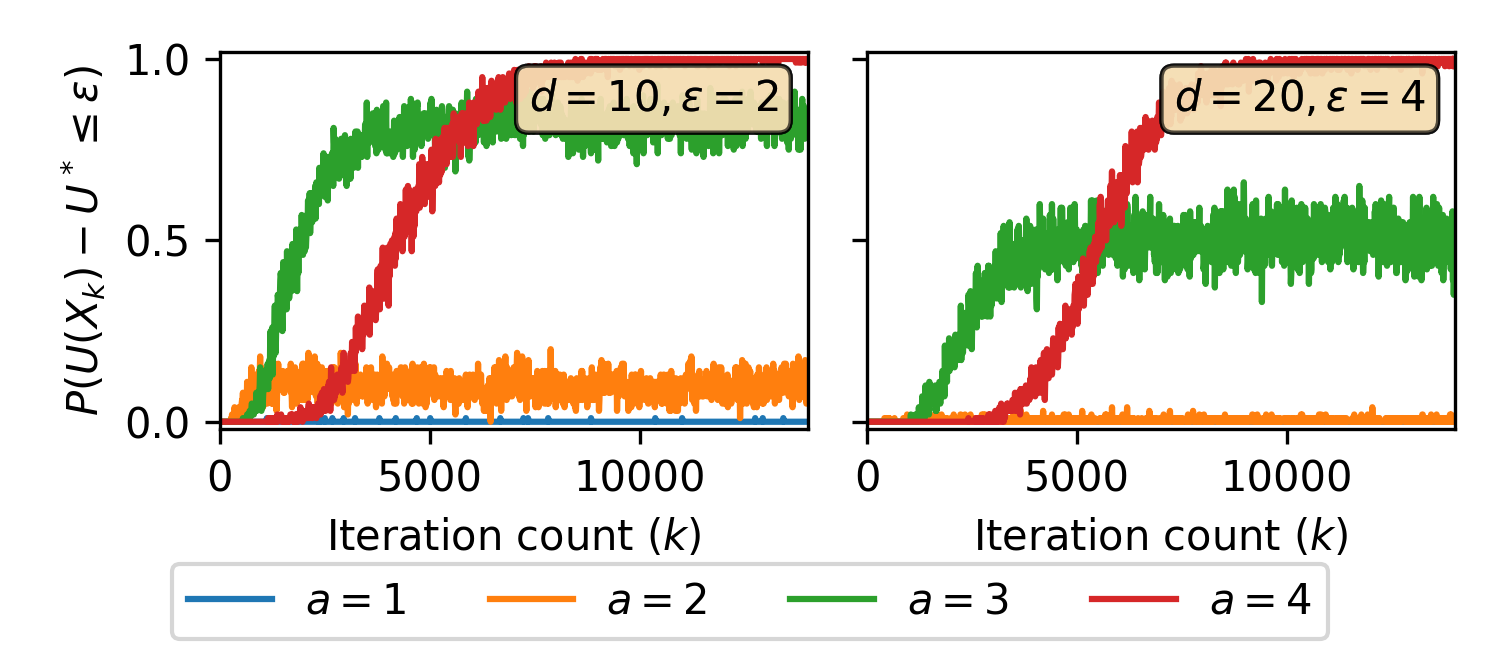}
    \caption{ \daniel{Empirical Probabilities of Hitting Tolerance $\varepsilon$ for $d=10$ and $d=20$.}}
    \label{S6::fig:dim}
\end{figure}

In all preceding figures, we observe a faster convergence to the target $\bmu^a$ for smaller $a$, however the target $\bmu^a$ is further from the true target $\bmu^*$, such that the probability converges to a value far from $1$. In order to leverage this, one can update the value of $a$ over the iterations, in the same spirit as \textit{simulated annealing} \citep{gidas_global_1985}. For a given $\underline{a}$ and $\overline a$, we let $a_k$ be the value of $a$ at iteration $k$, where we assume $(a_k)$ evolves linearly between $\underline{a}$ and $\overline{a}$ in $k$. Specifically, we set
\begin{equation}\label{S6::eq:ak}
    a_k=\frac{(K-k)\cdot\underline{a}-k\cdot \overline{a}}{K},
\end{equation}
where $K$ is the total number of iterations. We select $\underline{a}=0.1$, and vary the final value $\overline a$. Figure \ref{S6::fig:empirical_annealing} plots the empirical probabilities for various values of $\overline a$, now for smaller tolerances. We observe a faster convergence to a higher accuracy, as expected. For large values of $\overline{a}$, the increase from $\underline{a}$ to $\overline{a}$ is abrupt, causing the algorithm to become trapped in local minima. This behavior may originate from the suboptimality of the cooling scheme described in Equation \eqref{S6::eq:ak}. Further acceleration may be obtained either by optimizing the cooling scheme, or by employing alternatives to simulated annealing. Although we do not delve further into this, we refer the interested reader to \cite{marinari_simulated_1992}.

\begin{figure}[ht]
    \centering
    \includegraphics[width=\linewidth]{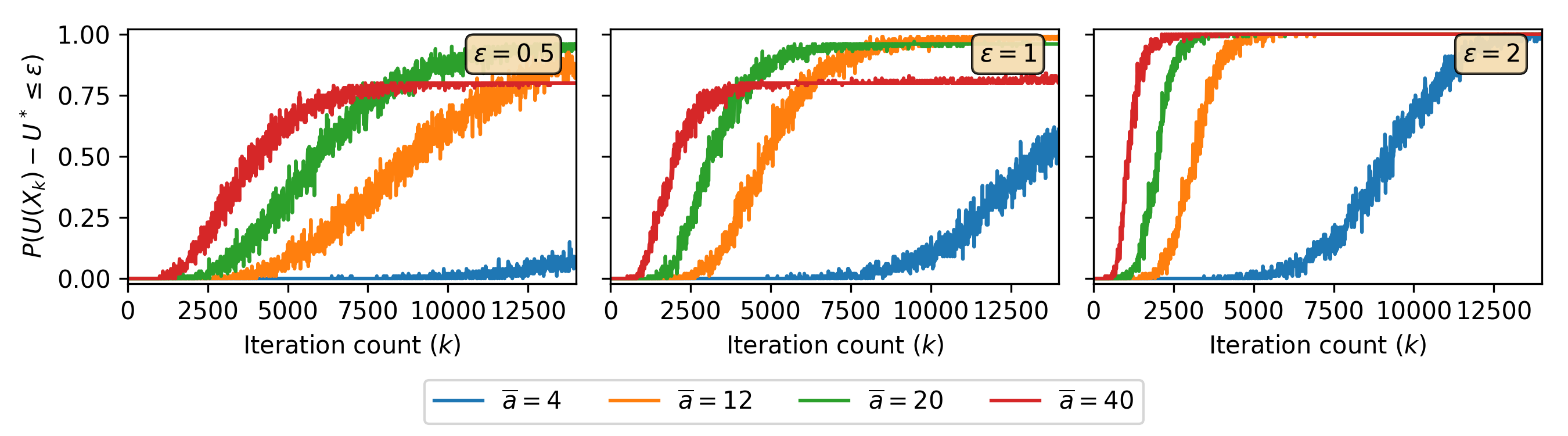}
    \caption{\daniel{Empirical Probabilities of Hitting Tolerance $\varepsilon$ with $a_k$ given by Equation \eqref{S6::eq:ak}, for 14000 Iterations.}}
    \label{S6::fig:empirical_annealing}
\end{figure}

\begin{table}
    \centering
    \caption{Average \daniel{Objective Function Value} for Fixed Effort $N\cdot K=140000$, with $a_k$ given by Equation \eqref{S6::eq:ak}.}
    \label{S6::tab:product}
    {\small\begin{tabular}{|c|cccc|} \hline
        $N$               & $\overline{a}=4$ & $\overline{a}=12$ & $\overline{a}=20$ & $\overline{a}=40$ \\ \hline
        $1$ & \textbf{0.140}  & 0.062  & 0.078  & \textbf{0.141}            \\ 
        $10$ & 0.143  & \textbf{0.048}  & \textbf{0.054}  & 0.176   \\ 
        $100$ & 0.197  & 0.229  & 1.311  & 4.432     \\ 
        $1000$ & 3.653  & 8.781  & 11.167  & 12.558     \\ 
        $10000$ & 16.206  & 16.014  & 16.023  & 16.160     \\ \hline
    \end{tabular}}
\end{table}

\begin{table*}
    \centering
    \caption{Comparison to Results in \cite{guilmeau_simulated_2021}, with $a_k$ given by Equation \eqref{S6::eq:ak}.}
    {\small\begin{tabular}{|c|c|cccc|cccccc|} \hline
        $K$ &                & SA & FSA & SMC-SA & CSA & $\overline a=1$ & $\overline a=2$ & $\overline a=3$   & $\overline a=4$ & $\overline a=5$ & $\overline a=6$\\ \hline
        $50$ & Avg    & 3.29 & 3.36 & 3.26 & \textbf{3.23}& 15.76 & 15.30 & 14.04  & 13.61  & 13.40  & 13.40 \\ 
        $50$ & SD     & \textbf{0.425} & 0.453 & 0.521 & 0.484 & 2.539  & 2.262 &  2.563  & 2.068  & 2.306  & 2.065 \\ \hline
        $500$ & Avg   & 2.52 & 2.64 & 2.62 & 2.47 & 2.56 & 0.74 & 0.38   & 0.32   & \textbf{0.31}   & 0.61 \\ 
        $500$ & SD   & 0.320 & 0.304 & 0.413 & 0.502 & 0.549  & 0.244 & 0.101  & \textbf{0.095}  & 0.223  & 0.433 \\ \hline
    \end{tabular}}
    \label{S6::tab:function_values_p2}
\end{table*}

For a fixed computational effort $N\cdot K$, one can choose to put a heavier emphasis on the number of samples $N$ or the number of iterations $K$. Table \ref{S6::tab:product} shows a comparison between multiple pairs of $(N, K)$ having a constant product and the average running best value over all iterates obtained. We observe that it is beneficial to select $N$ small, although $N=1$ is not always optimal. Moreover, $N=1$ does not allow for parallelization, contrary to $N>1$. This does align with the observations in Remark \ref{S5::rem:NK}.

In Table \ref{S6::tab:function_values_p2} we compare our results to well-studied variants of simulated annealing. Our algorithm is compared to \textit{Simulated Annealing} (SA) as presented in \cite{haario_simulated_1991}, \textit{Fast Simulated Annealing} (FSA) as presented in \cite{rubenthaler_fast_2009}, \textit{Sequential Monte Carlo Simulated Annealing} (SMC-SA) as presented in \cite{zhou_sequential_2013}, and \textit{Curious Simulated Annealing} as presented in \cite{guilmeau_simulated_2021}. The numerical values for the four aforementioned methods are extracted from \cite{guilmeau_simulated_2021}. For a fair comparison we use the same parameters, namely we perform $M=50$ runs with $N=250$ samples and $K=500$ number of iterations, and an initial distribution $\t\bmu_0=\delta_{\{x_0\}}$, where $x_0=(1, \ldots, 1)^T\in \R^{10}$. Table \ref{S6::tab:function_values_p2} transcribes the average running best function value and its corresponding standard deviation over the runs. Although at $50$ iterations the accuracy of our algorithm is worse than for the other presented algorithms, the comparison reverses at $500$ iterations, where our method outperforms the state-of-the-art methods by an order of magnitude of $10$. In the state-of-the-art simulated annealing methods, we observe very little improvement between $50$ and $500$ iterations compared to our algorithm. This reflects the well-known fact that simulated annealing methods tend to get stuck in local minimizers if the cooling scheme is not tailored to the problem, issue which our method does not seem to encounter.

\section{Conclusions}\label{S7::sec:concl}
The main focus of the paper is on a global optimization algorithm, which produces arbitrarily accurate solution with high probability. The main assumptions on the potential to be minimized were some regularity assumptions, and a log-Sobolev inequality on the Gibbs measure. This global optimization algorithm relies on an oracle sub-algorithm, which produces samples from a given Gibbs distribution. 

For this oracle algorithm, we introduced a new variant of Langevin dynamics, given in System \eqref{S3::eq:DNLD}. These dynamics are inspired by the deterministic high-resolution differential equation presented in System \eqref{S1::eq:firstorder} to tackle global optimization in nonconvex settings. Our continuous-time and discrete-time dynamics complement the classical overdamped and underdamped Langevin methods by adding another noise term. We established convergence properties of the continuous-time dynamics, showing that their invariant measure concentrates around the global minimizers of the potential $U$, and developed a practical sampling algorithm through discretization, which provably converges.

Our theoretical analysis and numerical experiments on the Rastrigin function demonstrate that the proposed method effectively navigates nonconvex landscapes and can obtain accurate solutions with high probability.

\subsection{Open Questions}
Several open questions remain for future work:

\begin{enumerate}[leftmargin=*]
\item The parameters of the algorithm were presented in a general form, and the optimal selection of these parameters is unclear. This is also the case for the ideal combination of $(N, K)$, as suggested by Table \ref{S6::tab:product}.
\item The constant $C$ in Inequality \eqref{S5::eq:hasen} is not explicitly defined in \cite{hasenpflug_wasserstein_2024}. An explicit value would enable an explicit bound in Theorem \ref{S5::thm:global}.
\item The technical Assumption \ref{S2b::ass:hasen} might be restrictive in certain contexts. Under more general smoothness assumptions, Algorithm \ref{S5::alg:global} can make use of sampling algorithms from non-smooth potentials, such as the proximal algorithm \citep{lee_structured_2021,chen_improved_2022,liang_proximal_2023}. Whence the interest in establishing a bound similar to Inequality \eqref{S5::hyp:smallEV}, under less restrictive assumptions.
\item The cooling scheme outlined in Equation \eqref{S6::eq:ak} is suboptimal, as evidenced by Figure \ref{S6::fig:empirical_annealing}. Further research is needed to provide a theoretical analysis of the simulated annealing variant of our algorithm. Additionally, the exploration of an adaptive scheme could be considered.
\end{enumerate}

\onlyifaccepted{
\subsection{Acknowledgements}

The authors thank the anonymous reviewers for their careful reading and insightful feedback on our manuscript.

They moreover thank Guillaume Garrigos, Huiyuan Guo, Lucas Ketels, Filip Voronine and Zepeng Wang, for our numerous meetings and their interesting and helpful insights and questions, and Mareike Hasenpflug for her help regarding her paper \citep{hasenpflug_wasserstein_2024}.

This research benefited from the support of the FMJH Program Gaspard Monge for optimization and operations research and their interactions with data science. We thank the Center for Information Technology of the University of Groningen for their support and for providing access to the Hábrók high performance computing cluster.
}

\bibliography{references}

\begin{thebibliography}{}

\bibitem[Alvarez et~al., 2002]{alvarez_secondorder_2002}
Alvarez, F., Attouch, H., Bolte, J., and Redont, P. (2002).
\newblock A second-order gradient-like dissipative dynamical system with {Hessian}-driven damping.: {Application} to optimization and mechanics.
\newblock {\em Journal de Mathématiques Pures et Appliquées}, 81(8):747--779.

\bibitem[Ambrosio et~al., 2005]{ambrosio_gradient_2005}
Ambrosio, L., Gigli, N., and Savaré, G. (2005).
\newblock {\em Gradient flows: in metric spaces and in the space of probability measures}.
\newblock Springer Science \& Business Media.

\bibitem[Attouch et~al., 2022]{attouch_firstorder_2022}
Attouch, H., Chbani, Z., Fadili, J., and Riahi, H. (2022).
\newblock First-order optimization algorithms via inertial systems with {Hessian} driven damping.
\newblock {\em Mathematical Programming}, pages 1--43.
\newblock Publisher: Springer.

\bibitem[Attouch et~al., 2018]{attouch_fast_2018}
Attouch, H., Chbani, Z., Peypouquet, J., and Redont, P. (2018).
\newblock Fast convergence of inertial dynamics and algorithms with asymptotic vanishing viscosity.
\newblock {\em Mathematical Programming}, 168:123--175.
\newblock Publisher: Springer.

\bibitem[Attouch et~al., 2016]{attouch_fast_2016}
Attouch, H., Peypouquet, J., and Redont, P. (2016).
\newblock Fast convex optimization via inertial dynamics with {Hessian} driven damping.
\newblock {\em Journal of Differential Equations}, 261(10):5734--5783.

\bibitem[Bakry et~al., 2014]{bakry_analysis_2014}
Bakry, D., Gentil, I., and Ledoux, M. (2014).
\newblock {\em Analysis and {Geometry} of {Markov} {Diffusion} {Operators}}, volume 348 of {\em Grundlehren der mathematischen {Wissenschaften}}.
\newblock Springer International Publishing, Cham.

\bibitem[Bakry and Émery, 1985]{bakry_diffusions_1985}
Bakry, D. and Émery, M. (1985).
\newblock Diffusions hypercontractives.
\newblock {\em Séminaire de probabilités de Strasbourg}, 19:177--206.
\newblock Publisher: Springer - Lecture Notes in Mathematics.

\bibitem[Balasubramanian et~al., 2022]{balasubramanian_theory_2022}
Balasubramanian, K., Chewi, S., Erdogdu, M.~A., Salim, A., and Zhang, S. (2022).
\newblock Towards a theory of non-log-concave sampling: first-order stationarity guarantees for {Langevin} {Monte} {Carlo}.
\newblock In {\em Conference on {Learning} {Theory}}, pages 2896--2923. PMLR.

\bibitem[Borysenko and Byshkin, 2021]{borysenko_coolmomentum_2021}
Borysenko, O. and Byshkin, M. (2021).
\newblock {CoolMomentum}: {A} method for stochastic optimization by {Langevin} dynamics with simulated annealing.
\newblock {\em Scientific Reports}, 11(1):10705.

\bibitem[Brigati and Pedrotti, 2024]{brigati_heat_2024}
Brigati, G. and Pedrotti, F. (2024).
\newblock Heat flow, log-concavity, and {Lipschitz} transport maps.
\newblock arXiv preprint arXiv:2404.15205.

\bibitem[Chen et~al., 2024]{chen_langevin_2024}
Chen, A.~Y., Sekhari, A., and Sridharan, K. (2024).
\newblock Langevin {Dynamics}: {A} {Unified} {Perspective} on {Optimization} via {Lyapunov} {Potentials}.
\newblock In {\em {OPT} 2024: 16th {Annual} {Workshop} on {Optimization} for {Machine} {Learning}}.

\bibitem[Chen et~al., 2018]{chen_accelerating_2018}
Chen, Y., Chen, J., Dong, J., Peng, J., and Wang, Z. (2018).
\newblock Accelerating {Nonconvex} {Learning} via {Replica} {Exchange} {Langevin} {Diffusion}.
\newblock In {\em International {Conference} on {Learning} {Representations}}.

\bibitem[Chen et~al., 2022]{chen_improved_2022}
Chen, Y., Chewi, S., Salim, A., and Wibisono, A. (2022).
\newblock Improved analysis for a proximal algorithm for sampling.
\newblock In {\em Proceedings of {Thirty} {Fifth} {Conference} on {Learning} {Theory}}, pages 2984--3014. PMLR.

\bibitem[Cheng and Bartlett, 2018]{cheng_convergence_2018}
Cheng, X. and Bartlett, P. (2018).
\newblock Convergence of {Langevin} {MCMC} in {KL}-divergence.
\newblock In {\em Algorithmic {Learning} {Theory}}, pages 186--211. PMLR.

\bibitem[Chewi et~al., 2022]{chewi_analysis_2022}
Chewi, S., Erdogdu, M.~A., Li, M.~B., Shen, R., and Zhang, M. (2022).
\newblock Analysis of {Langevin} {Monte} {Carlo} from {Poincaré} to {Log}-{Sobolev}.
\newblock In {\em Proceedings of {Thirty} {Fifth} {Conference} on {Learning} {Theory}}, volume 178, pages 1--2. PMLR.

\bibitem[Chewi and Stromme, 2024]{chewi_ballistic_2024}
Chewi, S. and Stromme, A.~J. (2024).
\newblock The ballistic limit of the log-{Sobolev} constant equals the {Polyak}-Łojasiewicz constant.
\newblock arXiv preprint arXiv:2411.11415.

\bibitem[Dalalyan, 2017]{dalalyan_further_2017}
Dalalyan, A. (2017).
\newblock Further and stronger analogy between sampling and optimization: {Langevin} {Monte} {Carlo} and gradient descent.
\newblock In {\em Proceedings of the 2017 {Conference} on {Learning} {Theory}}, volume~65 of {\em Proceedings of {Machine} {Learning} {Research}}, pages 678--689. PMLR.

\bibitem[Dalalyan and Karagulyan, 2019]{dalalyan_userfriendly_2019}
Dalalyan, A.~S. and Karagulyan, A. (2019).
\newblock User-friendly guarantees for the {Langevin} {Monte} {Carlo} with inaccurate gradient.
\newblock {\em Stochastic Processes and their Applications}, 129(12):5278--5311.

\bibitem[Dalalyan and Riou-Durand, 2020]{dalalyan_sampling_2020}
Dalalyan, A.~S. and Riou-Durand, L. (2020).
\newblock On sampling from a log-concave density using kinetic {Langevin} diffusions.
\newblock {\em arXiv preprint arXiv:1807.09382}.

\bibitem[Durmus and Moulines, 2016]{durmus_sampling_2016}
Durmus, A. and Moulines, E. (2016).
\newblock Sampling from a strongly log-concave distribution with the {Unadjusted} {Langevin} {Algorithm}.
\newblock {\em arXiv preprint arXiv:1605.01559}.

\bibitem[Feissner, 1972]{feissner_gaussian_1972}
Feissner, G.~F. (1972).
\newblock {\em A {Gaussian} measure analogue to {Sobolev}'s inequality}.
\newblock Cornell University.

\bibitem[Friedman, 1975]{friedman_stochastic_1975}
Friedman, A. (1975).
\newblock Stochastic {Differential} {Equations} and {Applications}.
\newblock In Friedman, A., editor, {\em Stochastic {Differential} {Equations}}, pages 75--148. Springer.

\bibitem[Gao et~al., 2020]{gao_breaking_2020}
Gao, X., Gurbuzbalaban, M., and Zhu, L. (2020).
\newblock Breaking {Reversibility} {Accelerates} {Langevin} {Dynamics} for {Non}-{Convex} {Optimization}.
\newblock In {\em Advances in {Neural} {Information} {Processing} {Systems}}, volume~33, pages 17850--17862. Curran Associates, Inc.

\bibitem[Gidas, 1985]{gidas_global_1985}
Gidas, B. (1985).
\newblock Global optimization via the {Langevin} equation.
\newblock In {\em 24th {IEEE} {Conference} on {Decision} and {Control}}, pages 774--778.

\bibitem[Gross, 1975]{gross_logarithmic_1975}
Gross, L. (1975).
\newblock Logarithmic {Sobolev} {Inequalities}.
\newblock {\em American Journal of Mathematics}, 97(4):1061--1083.

\bibitem[Guilmeau et~al., 2021]{guilmeau_simulated_2021}
Guilmeau, T., Chouzenoux, E., and Elvira, V. (2021).
\newblock Simulated {Annealing}: a {Review} and a {New} {Scheme}.
\newblock In {\em {IEEE} {Statistical} {Signal} {Processing} {Workshop}}, pages 101--105.

\bibitem[Haario and Saksman, 1991]{haario_simulated_1991}
Haario, H. and Saksman, E. (1991).
\newblock Simulated {Annealing} {Process} in {General} {State} {Space}.
\newblock {\em Advances in Applied Probability}, 23(4):866--893.

\bibitem[Hasenpflug et~al., 2024]{hasenpflug_wasserstein_2024}
Hasenpflug, M., Rudolf, D., and Sprungk, B. (2024).
\newblock Wasserstein convergence rates of increasingly concentrating probability measures.
\newblock {\em The Annals of Applied Probability}, 34(3):3320 -- 3347.

\bibitem[Holley and Stroock, 1987]{holley_logarithmic_1987}
Holley, R. and Stroock, D.~W. (1987).
\newblock Logarithmic {Sobolev} inequalities and stochastic {Ising} models.
\newblock {\em Journal of Statistical Physics}, 46(5):1159--1194.

\bibitem[Langevin, 1908]{langevin_theory_1908}
Langevin, P. (1908).
\newblock On the theory of brownian motion.
\newblock {\em Comptes Rendus de l'Académie des Sciences (Paris)}, 146:530.

\bibitem[Ledoux, 2006]{ledoux_concentration_2006}
Ledoux, M. (2006).
\newblock Concentration of measure and logarithmic {Sobolev} inequalities.
\newblock In {\em Seminaire de probabilites {XXXIII}}, pages 120--216. Springer.

\bibitem[Lee et~al., 2021]{lee_structured_2021}
Lee, Y.~T., Shen, R., and Tian, K. (2021).
\newblock Structured {Logconcave} {Sampling} with a {Restricted} {Gaussian} {Oracle}.
\newblock In {\em Proceedings of {Thirty} {Fourth} {Conference} on {Learning} {Theory}}, pages 2993--3050. PMLR.

\bibitem[Li et~al., 2022]{li_hessianfree_2022}
Li, R., Zha, H., and Tao, M. (2022).
\newblock Hessian-{Free} {High}-{Resolution} {Nesterov} {Acceleration} {For} {Sampling}.
\newblock In {\em Proceedings of the 39th {International} {Conference} on {Machine} {Learning}}, pages 13125--13162. PMLR.

\bibitem[Liang and Chen, 2023]{liang_proximal_2023}
Liang, J. and Chen, Y. (2023).
\newblock A {Proximal} {Algorithm} for {Sampling}.
\newblock {\em Transactions on Machine Learning Research}.

\bibitem[Liu et~al., 2023]{liu_polyaklojasiewicz_2023}
Liu, L., Majka, M.~B., and Szpruch, L. (2023).
\newblock Polyak–Łojasiewicz inequality on the space of measures and convergence of mean-field birth-death processes.
\newblock {\em Applied Mathematics \& Optimization}, 87(3):48.

\bibitem[Ma et~al., 2021]{ma_there_2021}
Ma, Y.-A., Chatterji, N.~S., Cheng, X., Flammarion, N., Bartlett, P.~L., and Jordan, M.~I. (2021).
\newblock Is there an analog of {Nesterov} acceleration for gradient-based {MCMC}?
\newblock {\em Bernoulli}, 27(3):1942 -- 1992.

\bibitem[Ma et~al., 2019]{ma_sampling_2019}
Ma, Y.-A., Chen, Y., Jin, C., Flammarion, N., and Jordan, M.~I. (2019).
\newblock Sampling can be faster than optimization.
\newblock {\em Proceedings of the National Academy of Sciences}, 116(42):20881--20885.

\bibitem[Marinari and Parisi, 1992]{marinari_simulated_1992}
Marinari, E. and Parisi, G. (1992).
\newblock Simulated {Tempering}: {A} {New} {Monte} {Carlo} {Scheme}.
\newblock {\em Europhysics Letters}, 19(6):451.

\bibitem[McDiarmid, 1989]{mcdiarmid_method_1989}
McDiarmid, C. (1989).
\newblock On the method of bounded differences.
\newblock {\em Surveys in Combinatorics}, 141(1):148--188.

\bibitem[Menozzi et~al., 2021]{menozzi_density_2021}
Menozzi, S., Pesce, A., and Zhang, X. (2021).
\newblock Density and gradient estimates for non degenerate {Brownian} {SDEs} with unbounded measurable drift.
\newblock {\em Journal of Differential Equations}, 272:330--369.

\bibitem[Menz and Schlichting, 2014]{menz_poincare_2014}
Menz, G. and Schlichting, A. (2014).
\newblock Poincaré and logarithmic {Sobolev} inequalities by decomposition of the energy landscape.
\newblock {\em The Annals of Probability}, 42(5):1809 -- 1884.

\bibitem[Mousavi-Hosseini et~al., 2023]{mousavi-hosseini_complete_2023}
Mousavi-Hosseini, A., Farghly, T.~K., He, Y., Balasubramanian, K., and Erdogdu, M.~A. (2023).
\newblock Towards a {Complete} {Analysis} of {Langevin} {Monte} {Carlo}: {Beyond} {Poincare} {Inequality}.
\newblock In {\em The {Thirty} {Sixth} {Annual} {Conference} on {Learning} {Theory}}, pages 1--35. PMLR.

\bibitem[Pinsker, 1964]{pinsker_information_1964}
Pinsker, M.~S. (1964).
\newblock Information and information stability of random variables and processes.
\newblock {\em Holden-Day}.

\bibitem[Raginsky et~al., 2017]{raginsky_nonconvex_2017}
Raginsky, M., Rakhlin, A., and Telgarsky, M. (2017).
\newblock Non-convex learning via {Stochastic} {Gradient} {Langevin} {Dynamics}: a nonasymptotic analysis.
\newblock In {\em Proceedings of the 2017 {Conference} on {Learning} {Theory}}, volume~65, pages 1674--1703. PMLR.

\bibitem[Rubenthaler et~al., 2009]{rubenthaler_fast_2009}
Rubenthaler, S., Rydén, T., and Wiktorsson, M. (2009).
\newblock Fast simulated annealing in \${\textbackslash}mathbb\{{R}\}{\textasciicircum}d\$ with an application to maximum likelihood estimation in state-space models.
\newblock {\em Stochastic Processes and their Applications}, 119(6):1912--1931.

\bibitem[Schlichting, 2019]{schlichting_poincare_2019}
Schlichting, A. (2019).
\newblock Poincaré and log–{Sobolev} inequalities for mixtures.
\newblock {\em Entropy}, 21(1):89.

\bibitem[Schuh and Whalley, 2024]{schuh_convergence_2024}
Schuh, K. and Whalley, P.~A. (2024).
\newblock Convergence of kinetic {Langevin} samplers for non-convex potentials.
\newblock {\em arXiv preprint arXiv:2405.09992}.

\bibitem[Shi et~al., 2022]{shi_understanding_2022}
Shi, B., Du, S.~S., Jordan, M.~I., and Su, W.~J. (2022).
\newblock Understanding the acceleration phenomenon via high-resolution differential equations.
\newblock {\em Mathematical Programming}, 195(1):79--148.

\bibitem[Su et~al., 2016]{su_differential_2016}
Su, W., Boyd, S., and Candes, E.~J. (2016).
\newblock A differential equation for modeling {Nesterov}'s accelerated gradient method: {Theory} and insights.
\newblock {\em Journal of Machine Learning Research}, 17(153):1--43.

\bibitem[Talagrand, 1996]{talagrand_transportation_1996}
Talagrand, M. (1996).
\newblock Transportation cost for {Gaussian} and other product measures.
\newblock {\em Geometric \& Functional Analysis}, 6(3):587--600.

\bibitem[Tzen et~al., 2018]{tzen_local_2018}
Tzen, B., Liang, T., and Raginsky, M. (2018).
\newblock Local {Optimality} and {Generalization} {Guarantees} for the {Langevin} {Algorithm} via {Empirical} {Metastability}.
\newblock In {\em Proceedings of the 31st {Conference} {On} {Learning} {Theory}}, volume~75, pages 857--875. PMLR.

\bibitem[Vempala and Wibisono, 2019]{vempala_rapid_2019}
Vempala, S. and Wibisono, A. (2019).
\newblock Rapid {Convergence} of the {Unadjusted} {Langevin} {Algorithm}: {Isoperimetry} {Suffices}.
\newblock In {\em Advances in {Neural} {Information} {Processing} {Systems}}, volume~32. Curran Associates, Inc.

\bibitem[Villani, 2009]{villani_optimal_2009}
Villani, C. (2009).
\newblock {\em Optimal {Transport}: {Old} and {New}}, volume 338.
\newblock Springer.

\bibitem[Xu et~al., 2018]{xu_global_2018}
Xu, P., Chen, J., Zou, D., and Gu, Q. (2018).
\newblock Global {Convergence} of {Langevin} {Dynamics} {Based} {Algorithms} for {Nonconvex} {Optimization}.
\newblock In {\em Advances in {Neural} {Information} {Processing} {Systems}}, volume~31. Curran Associates, Inc.

\bibitem[Zhang, 2024]{zhang_new_2024}
Zhang, X. (2024).
\newblock New algorithms for sampling and diffusion models.
\newblock arXiv preprint arXiv:2406.09665.

\bibitem[Zhang et~al., 2017]{zhang_hitting_2017}
Zhang, Y., Liang, P., and Charikar, M. (2017).
\newblock A {Hitting} {Time} {Analysis} of {Stochastic} {Gradient} {Langevin} {Dynamics}.
\newblock In {\em Proceedings of the 2017 {Conference} on {Learning} {Theory}}, volume~65 of {\em Proceedings of {Machine} {Learning} {Research}}, pages 1980--2022. PMLR.

\bibitem[Zhou and Chen, 2013]{zhou_sequential_2013}
Zhou, E. and Chen, X. (2013).
\newblock Sequential {Monte} {Carlo} simulated annealing.
\newblock {\em Journal of Global Optimization}, 55:101--124.

\end{thebibliography}
\bibliographystyle{apalike}

\newpage
\appendix

\section{Deferred Figures}\label{SA:Figures}
Figure \ref{S6::fig:rastringin} depicts the Rastrigin function in dimensions $d=1$ and $d=2$, illustrating its nonconvex nature.

\begin{figure}[H]
    \centering
    \includegraphics[width=0.48\linewidth]{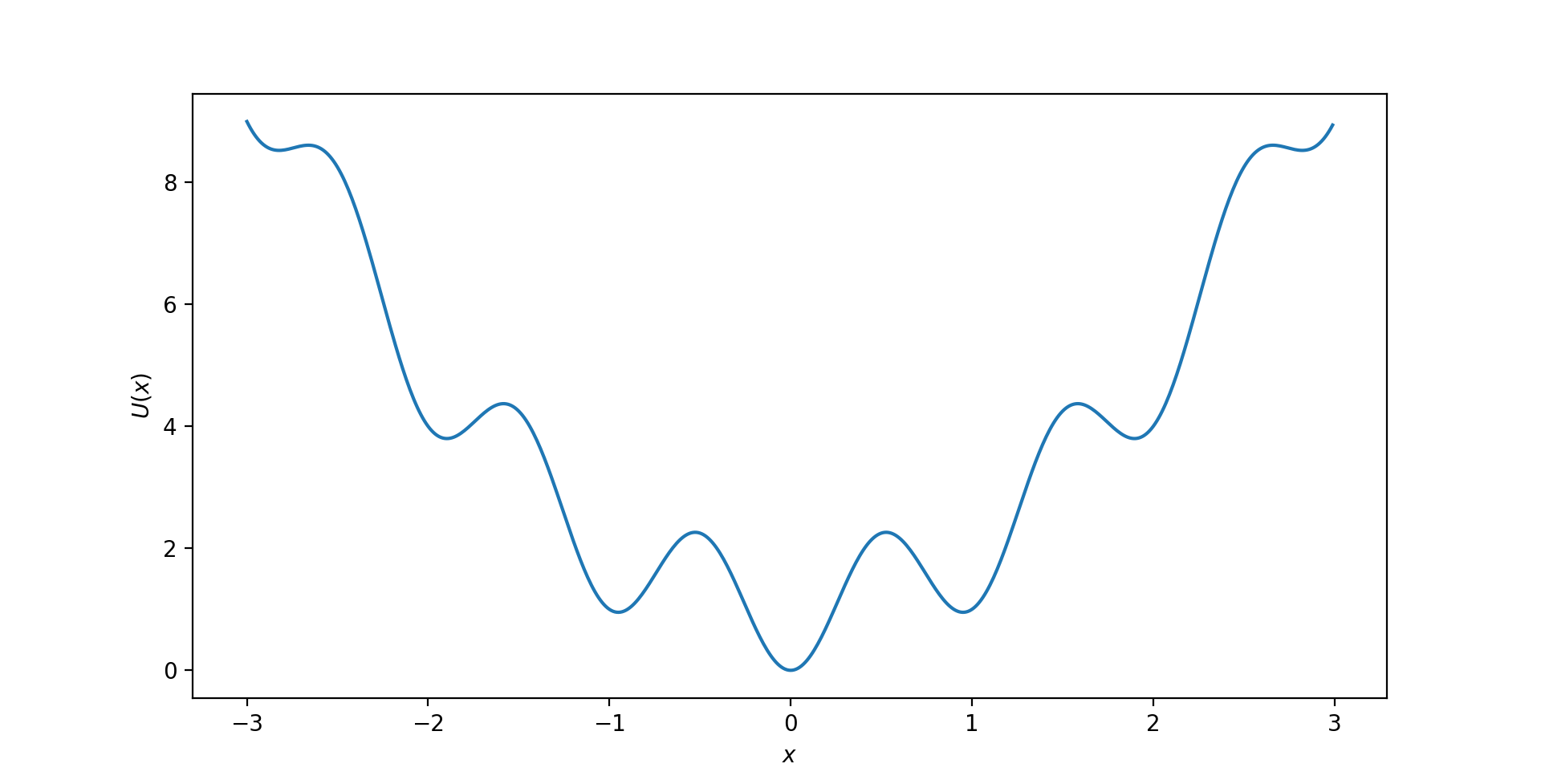} 
    \includegraphics[width=0.48\linewidth]{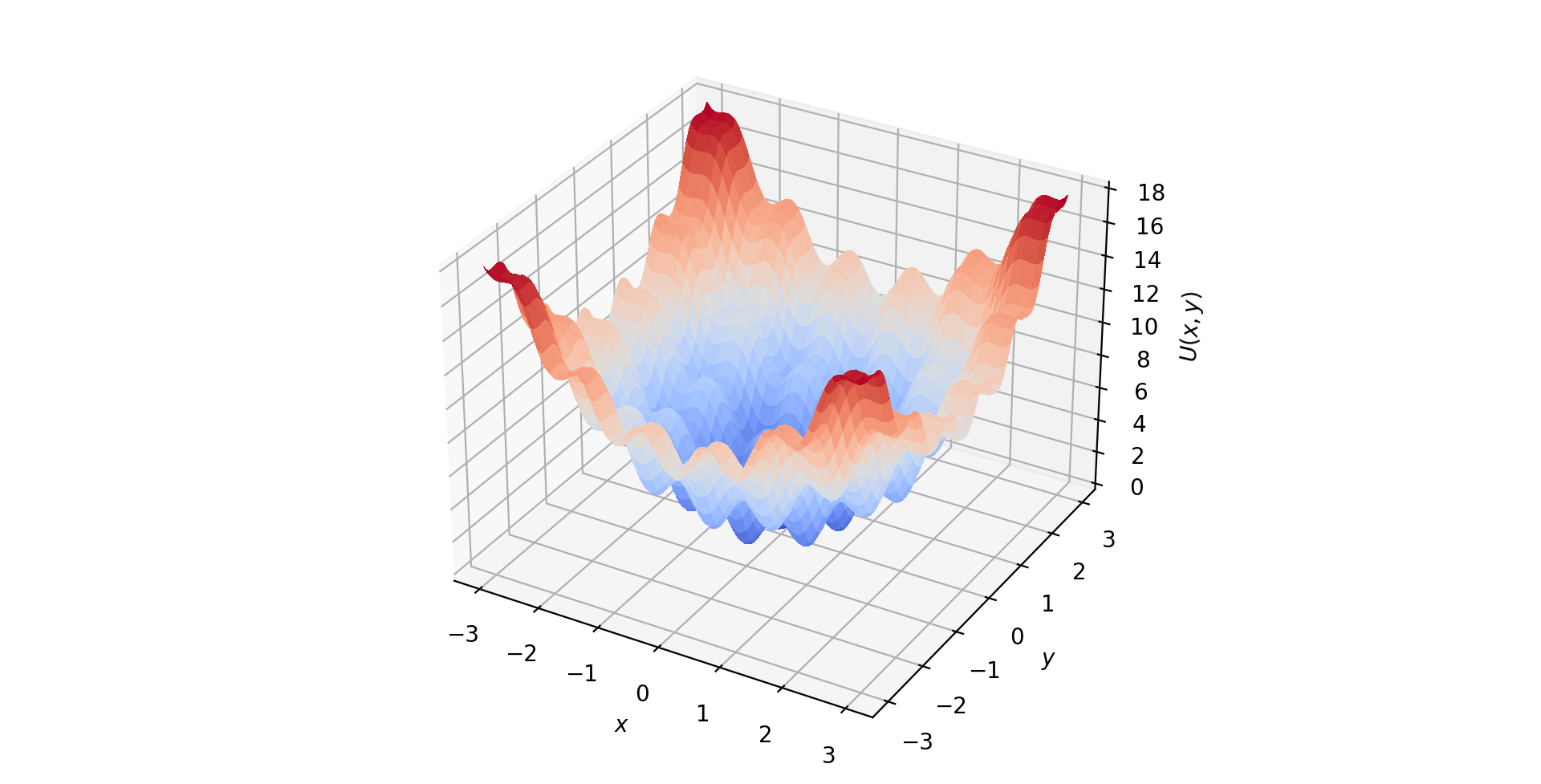} 
    \caption{Rastrigin Function for $d=1$ and $d=2$.}
    \label{S6::fig:rastringin}
\end{figure}

\section{Technical Assumptions}
Equation \eqref{S5::eq:hasen} holds under the following technical assumptions:
\begin{assumption}\label{S2b::ass:hasen}
    We assume
    \begin{enumerate}
        \item There exists a global minimizer $\hat x$ of $U$ satisfying 
        \[
            \int_{\R^d}\|x-\hat x\|^2\exp(-U(x))dx<+\infty.
        \]
        \item $U$ is $6$-times continuously differentiable on a set $F\subset \R^d$ such that all global minimizers are contained in the interior of $F$.
        \item The Hessian of $U$ is positive definite at each global minimizer.
    \end{enumerate}
\end{assumption}

\section{Deferred Proofs}
For a probability measure $\bmu\in \P$, we use the notation $\bmu(f)\coloneqq \E[f(Z)]$ where $Z\sim \bmu$ and $f$ is an integrable test function defined on $\R^d$. Given a process $Z_t\sim \bmu_t$, the notation $\bmu_{0|t}(f)$ denotes the function defined on $\R^d$ such that $\bmu_{0|t}(f)(z)\coloneqq \E[f(Z_0)|Z_t=z]$ for any $z\in \R^d$.

We start by introducing a lemma that will be used throughout the proofs.

\begin{lemma}\label{SA1::lem:vt}
    Let $\bmu_t\colon \R_{\ge 0}\to \P$ be a curve. Suppose that $v_t\colon \R_{\ge 0} \times \R^d\to \R^d$ is a sequence of vector fields satisfying $\frac{\partial}{\partial t}\bmu_t+\nabla \cdot ( \bmu_t\cdot v_t)=0$. Then it holds that, for all $\bmu^*\in \P$, 
    \[
        \frac{d}{dt}\KL(\bmu_t\|\bmu^*)=\E_{\bmu_t}\left[\left\langle\nabla \log \frac{\bmu_t}{\bmu^*}, v_t\right\rangle\right].
    \]
\end{lemma}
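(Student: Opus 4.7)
The plan is to carry out the classical computation for the time-derivative of relative entropy along a continuity equation. I would expand the KL divergence as an integral, differentiate under the integral, substitute the continuity equation, and integrate by parts.

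Concretely, I write
\[
    \KL(\bmu_t\|\bmu^*)=\int_{\R^d}\bmu_t(x)\log\frac{\bmu_t(x)}{\bmu^*(x)}\,dx.
\]
Assuming enough regularity of $\bmu_t$ and $v_t$ to justify swapping $\partial_t$ with the integral (in the intended application $\bmu_t$ is the law of an SDE with bounded Lipschitz drift, hence has a smooth density with sufficient decay), I differentiate to get
\[
    \frac{d}{dt}\KL(\bmu_t\|\bmu^*)=\int_{\R^d}\partial_t\bmu_t(x)\Bigl(\log\frac{\bmu_t(x)}{\bmu^*(x)}+1\Bigr)dx.
\]
The constant $1$ contributes $\int \partial_t \bmu_t\,dx=\partial_t\!\int\bmu_t\,dx=0$, so only the logarithmic term remains.

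Next I substitute the continuity equation $\partial_t\bmu_t=-\nabla\cdot(\bmu_t v_t)$ and integrate by parts:
\[
    \frac{d}{dt}\KL(\bmu_t\|\bmu^*)=-\int_{\R^d}\nabla\cdot(\bmu_t v_t)\log\frac{\bmu_t}{\bmu^*}\,dx=\int_{\R^d}\bmu_t\,\Bigl\langle\nabla\log\frac{\bmu_t}{\bmu^*},v_t\Bigr\rangle dx,
\]
which is exactly $\E_{\bmu_t}[\langle\nabla\log(\bmu_t/\bmu^*),v_t\rangle]$. The boundary term vanishes under the decay assumptions on $\bmu_t v_t \log(\bmu_t/\bmu^*)$ at infinity.

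The main obstacle is purely technical: justifying both the differentiation under the integral and the vanishing of the boundary terms in the integration by parts. In the settings where the lemma will be applied (laws of SDEs \eqref{S3::eq:DNLD} and \eqref{S4::eq:DNLA} against $\bmu^{a,b}$), these are standard consequences of the smoothness and Gaussian-type decay of $\bmu_t$ inherited from the Lipschitz drift and nondegenerate noise, together with the polynomial growth of $v_t$ (drift plus score). No delicate argument beyond these regularity checks is needed.
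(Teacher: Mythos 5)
Your computation is correct and is the standard derivation: expand the relative entropy as an integral, differentiate under the integral sign (the additive constant drops because $\int \partial_t \bmu_t = 0$), plug in the continuity equation, and integrate by parts, with boundary terms vanishing under suitable decay. The paper does not reproduce this argument but simply cites \citep[Eq.\ 10.1.16]{ambrosio_gradient_2005}; your proof is precisely the computation behind that reference, so there is no real divergence in approach --- you have just made explicit what the paper delegates to the citation.
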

\begin{proof}
    See \citep[Equation 10.1.16]{ambrosio_gradient_2005}.
\end{proof}

\subsection{Proof of Theorem \ref{S3::thm:KL}}\label{SA1::proof:KL}

1. Observe that 
\[
    \nabla_x\bmu^{a,b}=-a\nabla U(x)\bmu^{a,b}, \quad \nabla_y\bmu^{a,b}=-by\bmu^{a,b},
\]
\[
    \nabla_{xx}\bmu^{a,b}=-a\nabla^2 U(x)\bmu^{a,b}+a^2\|\nabla U(x)\|^2\bmu^{a,b}, \quad \nabla_{yy}\bmu^{a,b}=(-b+b^2\|y\|^2)\bmu^{a,b}.
\]
As such, under the given parameters, one easily checks that
\begin{align*}
    0&= (-a\beta +\sigma^2_x a^2)\|\nabla U(x)\|^2+(a-\gamma b)\nabla U(x)\cdot y+(-\alpha b+\sigma^2_y b^2)\|y\|^2+(\beta-\sigma^2_x a)\tr(\nabla^2U(x))+(\alpha-\sigma^2_y b) \\
    &=\beta\tr(\nabla^2) U(x)+a(-\beta\nabla U(x)+y)\cdot \nabla U(x)+\alpha+b(-\gamma\nabla U(x)-\alpha y)y \\
    &\qquad + \sigma^2_x(-a\tr(\nabla^2U(x))+a^2\|\nabla U(x)\|^2)+\sigma^2_y(-b+b^2\|y\|^2) \\
    &=\frac{1}{\bmu^{a,b}}\Big(-\nabla_x\cdot ((-\beta\nabla U(x)+y)\bmu^{a,b}(x, y))-\nabla_y\cdot ((-\gamma\nabla U(x)-\alpha y)\bmu^{a,b}(x, y))+\sigma^2_x \tr(\partial_{xx}\bmu^{a,b})+\sigma^2_y\tr(\partial_{yy}\bmu^{a,b}) \Big).
\end{align*}
It thus holds that $\bmu^{a,b}$ satisfies the Fokker-Planck Equation, and must hence be an invariant distribution.

2. The Fokker-Planck Equation associated with System \eqref{S3::eq:DNLD} reads
\begin{align*}
\partial_t \bmu_{t}
&=-\nabla_x\cdot ((-\beta\nabla U(x)+y)\bmu_{t}(x, y))-\nabla_y\cdot ((-\gamma\nabla U(x)-\alpha y)\bmu_{t}(x, y))+\sigma^2_x \Delta_{xx}\bmu_{t}+\sigma^2_y\Delta_{yy}\bmu_{t} \\
&=\nabla \cdot \left[\begin{pmatrix}
    \sigma_x & -1/b \\ 1/b & \sigma_y
\end{pmatrix}\begin{pmatrix}
    a\nabla U(x) \\ by
\end{pmatrix}\bmu_{t}\right] + \nabla\cdot \left[\begin{pmatrix}
    \sigma^2_x & -1/b \\ 1/b & \sigma^2_y
\end{pmatrix}\nabla \bmu_{t}\right] \\
&= \nabla \cdot \left[\begin{pmatrix}
    \sigma^2_x & -1/b \\ 1/b & \sigma^2_y
\end{pmatrix}\left[-\nabla \log \bmu^{a, b}+\nabla \log \bmu_{t}\right]\bmu_{t}\right] \\
&= \nabla \cdot \left[\begin{pmatrix}
    \sigma^2_x & -1/b \\ 1/b & \sigma^2_y
\end{pmatrix}\nabla \log\left(\frac{\bmu_{t}}{\bmu^{a, b}}\right)\bmu_{t}\right].
\end{align*}
As such, by Lemma \ref{SA1::lem:vt}, we know that
\begin{align}
    \frac{d}{dt}\KL(\bmu_{t}\|\bmu^{a, b})
    &=-\E_{\bmu_{t}}\left[\left\langle \nabla \log\left(\frac{\bmu_{t}}{\bmu^{a, b}}\right), \begin{pmatrix}\sigma^2_x & -1/b \\ 1/b & \sigma^2_y\end{pmatrix}\nabla \log\left(\frac{\bmu_{t}}{\bmu^{a,b}}\right)\right\rangle\right] \nonumber\\
    &=-\E_{\bmu_{t}}\left[\left\langle \nabla \log\left(\frac{\bmu_{t}}{\bmu^{a, b}}\right), \begin{pmatrix}\sigma^2_x & 0 \\ 0 & \sigma^2_y\end{pmatrix}\nabla \log\left(\frac{\bmu_{t}}{\bmu^{a, b}}\right)\right\rangle\right] \nonumber\\
    &\le-\min(\sigma^2_x, \sigma^2_y)\E_{\bmu_{t}}\left[\left\| \nabla \log\left(\frac{\bmu_{t}}{\bmu^{a, b}}\right)\right\|^2\right]. \label{eq:DN2_KLFi}
\end{align}
In specific, if we assume a log-Sobolev inequality with coefficient $\rho$, we get 
\[
    \frac{d}{dt}\KL(\bmu_{t}\|\bmu^{a,b})\le -2\rho\min(\sigma^2_x, \sigma^2_y)\KL(\bmu_{t}\|\bmu^{a, b}),
\]
which, by Grönwall's Inequality, implies
\[
    \KL(\bmu_{t}\|\bmu^{a, b})\le \exp(-2\rho \min(\sigma^2_x, \sigma^2_y)t)\KL(\bmu_{0}\|\bmu^{a,b}),
\]
which means we get convergence as long as $\sigma^2_x, \sigma^2_y>0$.
\subsection{Explicit Computations for Algorithm \ref{S4::alg:DNLA}}\label{SA2::comp:alg}

We first note that we can rewrite System \eqref{S4::eq:DNLA} in integral form as, for $t\in [kh, (k+1)h)$,
\begin{equation}\label{SA2::eq:integral}
    \begin{dcases}
        \t X_{t} = \t X_{kh}-\beta (t-kh) \nabla U(\t X_{kh})+\int_{kh}^t Y_{s}ds+\sqrt{2\sigma_x^2}\int_{kh}^tdB_s^x \\
        \t Y_{t} = e^{-\alpha (t-kh)}\t Y_{kh}-\frac{\gamma}{\alpha} \left(1-e^{-\alpha (t-kh)}\right) \nabla U(\t X_{kh}) + \sqrt{2\sigma_y^2}\int_{kh}^te^{-\alpha(t-s)}dB_s^y.
    \end{dcases}
\end{equation}

Conditionally on the initial condition $(\tilde X_{kh}, \tilde Y_{kh})$, the process $(\tilde X_t,\tilde Y_t)_{kh\le t\le (k+1)h}$ is an Ornstein-Uhlenbeck process on $[kh,(k+1)h]$. 

From now on, in this subsection, we always work implicitly conditionally to $(\tilde X_{kh}, \tilde Y_{kh})$. 

In particular, $\mathcal{L}((\tilde X_t,\tilde Y_t))$ is Gaussian and it remains to compute the associated expectation and covariance matrix to fully characterize it.

We thus compute
\[
    \E[\t Y_{t}] = e^{-\alpha (t-kh)}\t Y_{kh}-\frac{\gamma}{\alpha}\left(1-e^{-\alpha (t-kh)}\right)\nabla U(\t X_{kh}),
\]
from which we get that 
\[
    \E[\t X_{t}] = \t X_{kh} - \beta (t-kh)\nabla U(\t X_{kh}) + \frac{1-e^{-\alpha (t-kh)}}{\alpha}\t Y_{kh} - \frac{\gamma}{\alpha}\left(t-\frac{1-e^{-\alpha (t-kh)}}{\alpha}\right)\nabla U(\t X_{kh}).
\]
Now note that the Brownian motion term for $\t Y_{t}$ is $\sqrt{2\sigma_y^2}\int_{kh}^te^{-\alpha (t-s)}dB_s^y$, whereas the Brownian motion term for $\t X_{t}$ is 
\begin{align*}
\sqrt{2\sigma_x^2}\int_{kh}^tdB_s^x+\sqrt{2\sigma_y^2}\int_{kh}^t\int_{kh}^re^{-\alpha(r-s)}dB_s^ydr
    & =\sqrt{2\sigma_x^2}\int_{kh}^tdB_s^x+\sqrt{2\sigma_y^2}\int_{kh}^t\int_{s}^te^{-\alpha(r-s)}drdB_s^y \\
    & =\sqrt{2\sigma_x^2}\int_{kh}^tdB_s^x+\sqrt{2\sigma_y^2}\int_{kh}^t\frac{1-e^{-\alpha(t-s)}}{\alpha}dB_s^y.
\end{align*}
As such, 
\begin{align*}
    \Cov\left(\t Y_{t}, \t Y_{t}\right) &= \E\left[\left(\t Y_{t}-\E[\t Y_{t}]\right)\left(\t Y_{t}-\E[\t Y_{t}]\right)^T\right]\\
    &= 2\sigma_y^2 \cdot \E\left[\left(\int_{kh}^te^{-\alpha (t-s)}dB_s^y\right)\left(\int_{kh}^te^{-\alpha (t-s)}dB_s^y\right)^T\right] \\
    &= 2\sigma_y^2 \cdot \left(\int_{kh}^t e^{-2\alpha (t-s)}ds\right)\cdot I_{d} \\
    &= \sigma_y^2 \cdot \frac{1-e^{-2\alpha (t-{kh})}}{\alpha}\cdot I_{d}.
\end{align*}
Moreover, 
\begin{align*}
     \Cov\left(\t X_{t}, \t Y_{t}\right) &= \E\left[\left(\t X_{t}-\E[\t X_{t}]\right)\left(\t Y_{t}-\E[\t Y_{t}]\right)^T\right] \\
     &= \E\left[\left(\sqrt{2\sigma_x^2}\int_{kh}^tdB_s^x+\sqrt{2\sigma_y^2}\int_{kh}^t\frac{1-e^{-\alpha(t-s)}}{\alpha}dB_s^y\right)\left(\sqrt{2\sigma_y^2}\int_{kh}^te^{-\alpha (t-s)}dB_s^y\right)^T\right] \\
     &= 2\sqrt{\sigma_x^2\sigma_y^2}\E\left[\left(\int_{kh}^tdB_s^x\right)\left(\int_{kh}^te^{-\alpha (t-s)}dB_s^y\right)^T\right] \\
     &\qquad\qquad+ \frac{2\sigma_y^2}{\alpha}\E\left[\left(\int_{kh}^t1-e^{-\alpha(t-s)}dB_s^y\right)\left(\int_{kh}^te^{-\alpha (t-s)}dB_s^y\right)^T\right] \\
     &= \frac{2\sigma_y^2}{\alpha}\left(\int_{kh}^t (1-e^{-\alpha(t-s)})e^{-\alpha(t-s)}ds\right)\cdot I_d \\
     &= \frac{\sigma_y^2(1-e^{-\alpha (t-kh)})^2}{\alpha^2}\cdot I_d.
\end{align*}
And, finally, 
\begin{align*}
    \Cov\left(\t X_{t}, \t X_{t}\right) &= \E\left[\left(\t X_{t}-\E[\t X_{t}]\right)\left(\t X_{t}-\E[\t X_{t}]\right)^T\right] \\
    &= \E\left[\left(\sqrt{2\sigma_x^2}\int_{kh}^tdB_s^x+\sqrt{2\sigma_y^2}\int_{kh}^t\frac{1-e^{-\alpha(t-s)}}{\alpha}dB_s^y\right)\right.\cdot \\
    &\qquad \qquad \left.\left(\sqrt{2\sigma_x^2}\int_{kh}^tdB_s^x+\sqrt{2\sigma_y^2}\int_{kh}^t\frac{1-e^{-\alpha(t-s)}}{\alpha}dB_s^y\right)^T\right] \\
    &= 2\sigma_x^2\E\left[\left(\int_{kh}^tdB_s^x\right)\left(\int_{kh}^tdB_s^x\right)^T\right] + 2\sigma_y^2\E\left[\left(\int_{kh}^t\frac{1-e^{-\alpha(t-s)}}{\alpha}dB_s^y\right)\left(\int_0^t\frac{1-e^{-\alpha(t-s)}}{\alpha}dB_s^y\right)^T\right] \\
    &= 2\sigma_x^2 \left(\int_{kh}^tds\right)\cdot I_d + \frac{2\sigma_y^2}{\alpha^2} \left(\int_{kh}^t(1-e^{-\alpha(t-s)})^2ds\right)\cdot I_d \\
    &= \left(2\sigma_x^2+\frac{\sigma_y^2}{\alpha^3}\left[2\alpha (t-kh)+1-e^{-2\alpha (t-kh)}-4(1-e^{-\alpha (t-kh)})\right]\right)\cdot I_d.
\end{align*}
Selecting $t=(k+1)h$ yields the wanted result.

\subsection{Proof of Theorem \ref{S4::thm:KL}}\label{SA2::ssec:33}

For completion, we introduce the following technical lemma:
\begin{lemma}\label{SA2::lem:conditional}
Consider the $\R^d$-valued random process $(Z_t)$ defined through
$$
dZ_t=b(Z_0)dt+\sigma dW_t,
$$
with initial condition $Z_0\sim \bnu_0$ for some $\bnu_0\in \P$. Then $\bnu_t=\mathcal L(Z_t)$ is a weak solution of
$$
\partial_t\bnu_t=L^*_t \bnu_t,
$$
where 
$$
L^*_t\betaa=-\sum_{i=1}^d \partial_i(\bnu_{0|t}(b)\betaa)+\frac{1}{2}\sum_{i,j=1}^d\partial_{i,j}\big ( (\sigma\sigma^T)_{i,j}\betaa\big )\,.
$$
\end{lemma}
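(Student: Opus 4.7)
The plan is to characterize the evolution of $\bnu_t=\mathcal{L}(Z_t)$ in the weak sense by testing against a smooth compactly supported function $f\colon \R^d\to \R$, applying Itô's formula, and then exploiting the fact that the drift depends only on the initial datum $Z_0$ to rewrite the drift contribution via a conditional expectation.

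First, since $b(Z_0)$ is $\mathcal{F}_0$-measurable (hence $\mathcal{F}_t$-adapted and time-independent), Itô's formula applied to $f(Z_t)$ yields
\begin{equation*}
    df(Z_t)=\nabla f(Z_t)\cdot b(Z_0)\,dt+\nabla f(Z_t)^T\sigma\,dW_t+\tfrac{1}{2}\operatorname{tr}\bigl(\sigma\sigma^T\nabla^2 f(Z_t)\bigr)dt.
\end{equation*}
Taking expectations and using the martingale property of the stochastic integral gives
\begin{equation*}
    \frac{d}{dt}\E[f(Z_t)]=\E\bigl[\nabla f(Z_t)\cdot b(Z_0)\bigr]+\tfrac{1}{2}\E\bigl[\operatorname{tr}(\sigma\sigma^T\nabla^2 f(Z_t))\bigr].
\end{equation*}

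Next, I would handle the drift term by conditioning on $Z_t$. Because $\nabla f(Z_t)$ is $\sigma(Z_t)$-measurable, the tower property gives
\begin{equation*}
    \E\bigl[\nabla f(Z_t)\cdot b(Z_0)\bigr]=\E\bigl[\nabla f(Z_t)\cdot \E[b(Z_0)\mid Z_t]\bigr]=\int_{\R^d}\nabla f(z)\cdot \bnu_{0|t}(b)(z)\,\bnu_t(dz),
\end{equation*}
by the very definition of $\bnu_{0|t}(b)$. The diffusion term is already in the desired form. Integrating by parts in the distributional sense (this is what the phrase ``weak solution'' in the statement covers, so no boundary terms arise against test functions) then identifies
\begin{equation*}
    \int f\,\partial_t\bnu_t\,dz=-\int f\sum_{i=1}^d\partial_i\bigl(\bnu_{0|t}(b)\,\bnu_t\bigr)dz+\tfrac{1}{2}\int f\sum_{i,j=1}^d\partial_{i,j}\bigl((\sigma\sigma^T)_{i,j}\bnu_t\bigr)dz,
\end{equation*}
which is exactly $\langle f,L_t^*\bnu_t\rangle$ for every smooth test $f$, proving the claim.

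The only genuine subtlety is the first equality after conditioning: one must be careful that $b(Z_0)$ is integrable (so that the conditional expectation is well-defined) and that the swap of derivative in $t$ and expectation is justified; both are standard under mild growth assumptions on $b$. The non-Markovian flavour of the SDE (drift evaluated at $Z_0$ rather than $Z_t$) is precisely what is encoded by the factor $\bnu_{0|t}(b)$ replacing $b$ in the usual Fokker–Planck operator, so no further machinery is needed.
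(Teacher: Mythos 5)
Your proof is correct and takes essentially the same route as the paper: Itô's formula on a test function, expectations, the tower property to replace $b(Z_0)$ with $\E[b(Z_0)\mid Z_t]=\bnu_{0|t}(b)$ inside the drift term, and then passing to the adjoint operator to read off the weak Fokker--Planck equation. The only cosmetic difference is that the paper keeps the integral (Dynkin) form and differentiates at the end, whereas you differentiate from the start; this is immaterial.
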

\begin{proof}
Let us consider a smooth real-valued function $f$. Then  Ito's lemma together with the tower property of conditional expectation yields 
$$
\mathbb{E}[f(Z_t)]=\mathbb{E}[f(Z_0)]+\int_0^t \mathbb{E}\left [\sum_{i=1}^d \mathbb{E}[b_i(Z_0)\vert Z_s]\partial_i f(Z_s) +\frac{1}{2}\sum_{i,j=1}^d (\sigma\sigma^T)_{i,j}\partial_{i,j}f(Z_s)\right ] ds\,,
$$
which reads 
$$
(\bnu_t-\bnu_0)(f)=\int_0^t\bnu_s\left (\sum_{i=1}^d\bnu_{0|s}(b_i)\partial_i f+\frac{1}{2}\sum_{i,j=1}^d (\sigma\sigma^T)_{i,j}\partial_{i,j}f\right ) ds\,.
$$ 
Let $L$ be the differential operator such that 
$$
L_tf:=\sum_{i=1}^d\bnu_{0|t}(b_i)\partial_i f+\frac{1}{2}\sum_{i,j=1}^d (\sigma\sigma^T)_{i,j}\partial_{i,j}f
$$
Then 
$$
(\bnu_t-\bnu_0)(f)=\int_0^t\bnu_s(L_tf)ds=\int_0^tL^*_t\bnu_s(f)ds,
$$
and differentiating yields the wanted result.
\end{proof}

Given the parameters $\alpha, \beta, \gamma, \sigma_x^2, \sigma_y^2, a, b, \rho, L$, we define
\begin{equation}\label{SA2::eq:defs}
    \begin{dcases}
        \theta &\coloneqq \rho\min(\sigma_x^2, \sigma_y^2), \\
        \tau &\coloneqq \frac{a^2L^2(\sigma_x^4+b^{-2})}{2\min(\sigma_x^2, \sigma_y^2)}, \\
        A &\coloneqq 12+4\beta^2L^2+4\gamma^2L^2, \\
        B &\coloneqq 2\sigma^2_x +\frac{12}{b}+\frac{4\beta^2L}{a}+3\sigma^2_y+\frac{4\gamma ^2L}{a}, \\
        \hat B &\coloneqq 2\tau B\cdot d.
    \end{dcases}
\end{equation}
We now introduce the more precise statement of Theorem \ref{S4::thm:KL}.

\begin{theorem}\label{SA2::prop:convdisc}
Assume $\bmu^{a,b}$ satisfies a log-Sobolev inequality with constant $\rho$. Assume $(\t X_{t}, \t Y_{t})$ follows System \eqref{S4::eq:DNLA}, with initial distribution $(\t X_{0}, \t Y_{0})\sim \t\bmu_0$. Moreover, assume that
\begin{equation}\label{SA2::eq:assprop}
    h< \min\left(1, \frac1\theta, \sqrt{\frac{\theta \rho}{8\tau A}}\right).
\end{equation}
Then it holds that 
    \begin{equation}\label{SA2::eq:res1}
        \KL(\t\bmu_h\|\bmu^{a,b})\le e^{-\theta h/2}\KL(\t \bmu_0\| \bmu^{a,b})+ \hat B h^2,
    \end{equation}
    and for all $K\ge 1$, 
    \begin{equation}\label{SA2::eq:result}
        \KL(\t\bmu_{Kh}\|\bmu^{a,b})\le \exp(-\theta Kh/2)\KL(\t\bmu_{0}\|\bmu^{a,b})+ \frac{3\hat B h}{4\theta}.
    \end{equation}
\end{theorem}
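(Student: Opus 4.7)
The strategy follows the template of Vempala--Wibisono for overdamped Langevin (as also adapted in \cite{ma_there_2021} for the underdamped case), applied to the two-component system \eqref{S4::eq:DNLA}. The plan is to first establish a one-step descent inequality by differentiating the KL divergence along the discretized flow, then iterate.

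First I would use Lemma \ref{SA2::lem:conditional} to write down the Fokker--Planck equation satisfied by $\t\bmu_t$ on the interval $[0,h]$. Since the drift in \eqref{S4::eq:DNLA} is frozen at $(\t X_0,\t Y_0)$, the effective evolution operator involves the conditional expectation $\t\bmu_{0|t}(\nabla U(\t X_0))$ rather than $\nabla U(\t X_t)$. Writing the equation in the form $\partial_t \t\bmu_t + \nabla\cdot(\t\bmu_t v_t)=0$ with $v_t$ the diffusion-adjusted drift, Lemma \ref{SA1::lem:vt} then gives
\[
\frac{d}{dt}\KL(\t\bmu_t\|\bmu^{a,b})=\E_{\t\bmu_t}\!\left[\left\langle \nabla\log\frac{\t\bmu_t}{\bmu^{a,b}},\,v_t\right\rangle\right].
\]
Splitting $v_t$ as (continuous-time drift) $+$ (discretization error), the first contribution reproduces exactly the computation in Appendix \ref{SA1::proof:KL}, yielding a term bounded by $-\min(\sigma_x^2,\sigma_y^2)\,\Fi(\t\bmu_t\|\bmu^{a,b})$, while the second contribution is a cross term involving $\E[\|\nabla U(\t X_0)-\nabla U(\t X_t)\|^2]$ times the parameters $\beta,\gamma$.

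Next I would apply Young's inequality to absorb a fraction of the negative Fisher-information term and leave enough room to kill the error. Using $L$-smoothness of $U$, the error becomes $L^2\,\E[\|\t X_t-\t X_0\|^2]$, which I would estimate by the integral form \eqref{SA2::eq:integral}: the drift contribution is $\mathcal O(h^2)$ and the noise contribution is $\mathcal O(\sigma_x^2 h+\sigma_y^2 h^3/\alpha^2)$, with an additional term proportional to $\E[\|\t Y_0\|^2]$ that is controlled via the invariance of $\bmu^{a,b}$ and the initial KL divergence (standard moment-to-KL bookkeeping). Invoking the log-Sobolev inequality \eqref{S2b::ineq:LSI} on $\bmu^{a,b}$ to replace the remaining Fisher information by $2\rho\,\KL(\t\bmu_t\|\bmu^{a,b})$ yields a Grönwall-type differential inequality
\[
\frac{d}{dt}\KL(\t\bmu_t\|\bmu^{a,b})\le -\theta\,\KL(\t\bmu_t\|\bmu^{a,b})+\text{error}(t),
\]
with $\theta=\rho\min(\sigma_x^2,\sigma_y^2)$ as in \eqref{SA2::eq:defs}. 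The assumption $h<\sqrt{\theta\rho/(8\tau A)}$ is precisely what is needed for the error coefficient to be absorbed, leaving the effective rate $\theta/2$ rather than $\theta$. Integrating over $[0,h]$ and bounding the error integral by $\hat B h^2$ produces \eqref{SA2::eq:res1}.

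Finally, I would obtain the multi-step bound \eqref{SA2::eq:result} by iterating \eqref{SA2::eq:res1} across the $K$ intervals. The iteration gives a geometric series
\[
\KL(\t\bmu_{Kh}\|\bmu^{a,b})\le e^{-\theta Kh/2}\KL(\t\bmu_0\|\bmu^{a,b})+\hat B h^2\sum_{j=0}^{K-1}e^{-\theta j h/2},
\]
and the hypothesis $h<1/\theta$ makes $\theta h/2<1/2$, so that $1-e^{-\theta h/2}\ge \tfrac{2}{3}\cdot\tfrac{\theta h}{2}$, converting the geometric sum into $\tfrac{3\hat B h}{4\theta}$ after a clean constant chase.

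The principal obstacle I foresee is the bookkeeping of the error term: the System \eqref{S4::eq:DNLA} has two coupled components with different noise scales, so controlling $\E[\|\t X_t-\t X_0\|^2]$ and $\E[\|\t Y_t-\t Y_0\|^2]$ uniformly on $[0,h]$ while keeping the final constants $A$, $B$, $\hat B$ as stated in \eqref{SA2::eq:defs} is where all the work lies. In particular, uniform-in-$k$ moment bounds on $\t Y_{kh}$ must be derived either by stationarity-like arguments under $\bmu^{a,b}$ or by an a priori inductive estimate, and it is this step that feeds the constant $a^2L^2$ appearing in $\tau$ and ultimately produces the $a^2d/\rho$ dependence reported in Theorem \ref{S4::thm:KL}.
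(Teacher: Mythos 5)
Your proposal reproduces the paper's argument essentially step for step: Lemma \ref{SA2::lem:conditional} to write the Fokker--Planck equation with the frozen-drift conditional expectation, Lemma \ref{SA1::lem:vt} plus Young's inequality to split off $-\tfrac12\min(\sigma_x^2,\sigma_y^2)\Fi$ from a discretization error, $L$-smoothness to reduce that error to $\E[\|\t X_t-\t X_0\|^2]$, moment bounds via the integral form of \eqref{S4::eq:DNLA} together with coupling to the invariant measure, Talagrand's inequality to convert the resulting $W_2^2(\t\bmu_0,\bmu^{a,b})$ term into $\KL_0$ so it can be absorbed into the exponential under the step-size condition, and finally Gr\"onwall plus a geometric sum. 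You also correctly identify the role of $h<\sqrt{\theta\rho/(8\tau A)}$ as the condition that halves the effective rate from $\theta$ to $\theta/2$; the proposal is sound and matches the paper's route.
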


\begin{proof}
Choose $h$ according to Equation \eqref{SA2::eq:assprop}, and fix some $t\le h$, such that System \eqref{S4::eq:DNLA} reduces to
\begin{equation}\label{SA2::eq:integral_sub}
    \begin{dcases}
        \t X_{t} = \t X_{0}-\beta t \nabla U(\t X_{0})+\int_{0}^t Y_{s}ds+\sqrt{2\sigma_x^2}\int_{0}^tdB_s^x \\
        \t Y_{t} = e^{-\alpha t}\t Y_{0}-\frac{\gamma}{\alpha} \left(1-e^{-\alpha t}\right) \nabla U(\t X_{0}) + \sqrt{2\sigma_y^2}\int_{0}^te^{-\alpha(t-s)}dB_s^y.
    \end{dcases}
\end{equation}
Denote by $\t \bmu_{t}$ the joint law of $(\t X_{t}, \t Y_{t})$. Note that in the above process, $(\t X_{0},\t Y_{0})$ is itself a random variable, with joint law $\t \bmu_{0}$. Denote by $\t \bmu_{0, t}$ the joint law of $(\t X_{0}, \t Y_{0})$ and $(\t X_{t}, \t Y_{t})$, and by $\t\bmu_{0|t}$  the conditional law $\mathcal{L}((\t X_0, \t Y_0) | (\t X_t,\t Y_t))$. 

Applying Lemma \ref{SA2::lem:conditional}  to the process $\t Z=(\t X, \t Y)$ implies that $\t\bmu_t$ satisfies 
\begin{align*}
    \frac{\partial}{\partial t}\t \bmu_{t}
    &= \nabla \cdot \left(\begin{pmatrix}\sigma^2_x & -1/b \\ 1/b & \sigma^2_y\end{pmatrix}\left[\begin{pmatrix}a\t \bmu_{0|t}\big (\nabla U\big )(x,y)\\ b y\end{pmatrix}\t \bmu_{t}+\nabla \t \bmu_{t}\right]\right) \\
    &= \nabla \cdot \left(\begin{pmatrix}\sigma^2_x & -1/b \\ 1/b & \sigma^2_y\end{pmatrix}\left[\begin{pmatrix}a\nabla U(x)\\ b y\end{pmatrix}+\nabla \log \left(\t \bmu_{t}\right)+\begin{pmatrix}a\bmu_{0|t}\big (\nabla U\big )(x,y)-\nabla U(x)\\ 0\end{pmatrix}\right]\t\bmu_{t}\right) \\
    &= \nabla \cdot \left(\begin{pmatrix}\sigma^2_x & -1/b \\ 1/b & \sigma^2_y\end{pmatrix}\left[\nabla \log \frac{\t \bmu_{t}}{\bmu^{a,b}}+\begin{pmatrix}a\bmu_{0|t} \big (\nabla U\big )(x,y)-\nabla U(x)\\ 0\end{pmatrix}\right]\t\bmu_{t}\right).
\end{align*}
As such, the vector field given by 
\[
    v_t=-\begin{pmatrix}\sigma^2_x & -1/b \\ 1/b & \sigma^2_y\end{pmatrix}\left(\nabla \log \frac{\t \bmu_{t}}{\bmu^{a,b}}+\begin{pmatrix}a\t\bmu_{0|t}\big (\nabla U\big )(x,y)-\nabla U(x)\\ 0\end{pmatrix}\right)
\]
is tangent to $(\t \bmu_{t})$, and hence, by Lemma \ref{SA1::lem:vt}, it holds that
\begin{align}
    \frac{d}{dt}\KL(\t\bmu_{t}\|\bmu^{a,b})
    &=-\E_{\t\bmu_{t}}\left[\left\langle \nabla \log \frac{\t\bmu_{t}}{\bmu^{a,b}}, \begin{pmatrix}\sigma^2_x & -1/b \\ 1/b & \sigma^2_y\end{pmatrix}\left(\nabla \log \frac{\t \bmu_{t}}{\bmu^{a,b}}+\begin{pmatrix}a\t\bmu_{0|t}\big (\nabla U\big )(\t X_t,\t Y_t)-\nabla U(\t X_{t})\\ 0\end{pmatrix}\right) \right\rangle\right] \nonumber\\
    &=-\E_{\t\bmu_{t}}\left[\left\langle \nabla \log \frac{\t\bmu_{t}}{\bmu^{a,b}}, \begin{pmatrix}\sigma^2_x & -1/b \\ 1/b & \sigma^2_y\end{pmatrix}\nabla \log \frac{\t \bmu_{t}}{\bmu^{a,b}}\right\rangle\right] \nonumber\\
    &\qquad \qquad - \E_{\t\bmu_{t}}\left[\left\langle \nabla \log \frac{\t\bmu_{t}}{\bmu^{a,b}},\begin{pmatrix}\sigma^2_x & -1/b \\ 1/b & \sigma^2_y\end{pmatrix}\begin{pmatrix}a\t\bmu_{0|t}\big (\nabla U\big )(\t X_t,\t Y_t)-\nabla U(\t X_{t})\\ 0\end{pmatrix} \right\rangle\right] \\
    &=-\E_{\t\bmu_{t}}\left[\left\langle \nabla \log \frac{\t\bmu_{t}}{\bmu^{a,b}}, \begin{pmatrix}\sigma^2_x & 0 \\ 0 & \sigma^2_y\end{pmatrix}\nabla \log \frac{\t \bmu_{t}}{\bmu^{a,b}}\right\rangle\right] \nonumber\\
    &\qquad \qquad- \E_{\t\bmu_{0, t}}\left[\left\langle \nabla \log \frac{\t\bmu_{t}}{\bmu^{a,b}},\begin{pmatrix}\sigma^2_x & -1/b \\ 1/b & \sigma^2_y\end{pmatrix}\begin{pmatrix}a[\nabla U(\t X_{0})-\nabla U(\t X_{t})]\\ 0\end{pmatrix} \right\rangle\right] \nonumber\\
    &\le -\frac{\min(\sigma^2_x, \sigma^2_y)}2\Fi(\t\bmu_{t}\|\bmu^{a,b})\nonumber \\
    &\qquad \qquad +\frac1{2\min(\sigma^2_x, \sigma^2_y)} \E_{\t\bmu_{0, t}}\left[\left\|\begin{pmatrix}\sigma^2_x & -1/b \\ 1/b & \sigma^2_y\end{pmatrix}\begin{pmatrix}a[\nabla U(\t X_{0})-\nabla U(\t X_{t})]\\ 0\end{pmatrix} \right\|^2\right], \label{eq:longend}
\end{align}
where the final inequality follows by Young's Inequality with coefficient $\min(\sigma_x^2, \sigma_y^2)$. Now note that 
\begin{align*}
\E_{\t\bmu_{0, t}}\left[\left\|\begin{pmatrix}\sigma^2_x & -1/b \\ 1/b & \sigma^2_y\end{pmatrix}\begin{pmatrix}a[\nabla U(\t X_{0})-\nabla U(\t X_{t})]\\ 0\end{pmatrix} \right\|^2\right]
&=a^2(\sigma^4_x+b^{-2})\E_{\tilde \bmu_{0, t}}[\|\nabla U(\t X_{0})-\nabla U(\t X_{t})\|^2]\\
&\le a^2L^2(\sigma^4_x+b^{-2})\E_{\tilde \bmu_{0, t}}[\|\t X_{0}- \t X_{t}\|^2].
\end{align*}
As such, Equation \eqref{eq:longend} reads
\begin{equation}\label{SA2::eq:longendcont}
    \frac{d}{dt}\KL(\t\bmu_{t}\|\bmu^{a,b})\le -\frac{\min(\sigma^2_x, \sigma^2_y)}2\Fi(\t\bmu_{t}\|\bmu^{a,b})+\tau \E_{\tilde \bmu_{0, t}}[\|\t X_{0}- \t X_{t}\|^2].
\end{equation}
Now notice that, by the integral Equations \eqref{SA2::eq:integral_sub} and Jensen's Inequality, 
\begin{align}
    \E_{\tilde \bmu_{0, t}}[\|\t X_{0}- \t X_{t}\|^2]
    &=\E_{\tilde \bmu_{0, t}}\left[\left\|\beta t \nabla U(\t X_{0})+\int_{0}^t \t Y_{s}ds+\sqrt{2\sigma^2_x}\int_0^td\t B_s\right\|^2\right] \nonumber \\
    &\le 2\beta^2t^2\E_{\t \bmu_{0, t}}[\|\nabla U(\t X_{0})\|^2]+2t\int_0^t\E_{\t\bmu_{0, t}}\left[\|\t Y_{s}\|^2\right]ds+2\sigma^2_xd\cdot t. \label{eq:DN2_tbs}
\end{align}
Now observe that, for $\bzeta$ an optimal coupling between $\t\bmu_{0}$ and $\bmu^{a,b}$ and for $(X^{a,b}, Y^{a,b})\sim \bmu^{a,b}$,
\begin{align}
    \E_{\t \bmu_{0}}[\|\nabla U(\t X_{0})\|^2]
    &\le 2\E_{\bzeta}[\|\nabla U(\t X_{0})-\nabla U(X^{a,b})\|^2]+2\E_{\bmu^{a,b}}[\|\nabla U(X^{a,b})\|^2] \nonumber \\
    &\le 2L^2\E_{\bzeta}[\|\t X_{0}- X^{a,b}\|^2]+2\E_{\bmu^{a,b}}[\|\nabla U(X^{a,b})\|^2] \nonumber \\
    &= 2L^2 W_0+2\E_{\bmu^{a,b}}[\|\nabla U(X^{a,b})\|^2],\label{eq:DN2_tbs2}
\end{align}
where $W_0=W_2^2(\t \bmu_0, \bmu^{a,b})$. Moreover, by denoting $Z_{a,b}$ the normalization constant of $\bmu^{a,b}$, 
\begin{align}
    \E_{\bmu^{a,b}}[\|\nabla U(x)\|^2]
    &=\iint_{\R^d\times \R^d} \nabla U(x)\cdot \nabla U(x) d\bmu^{a,b} (x,y) \nonumber \\
    &=\frac1{Z_{a,b}}\int_{\R^d} e^{-b\|y\|^2/2}\int_{\R^d} \nabla U(x)\cdot \nabla U(x)e^{-aU(x)}dxdy \nonumber \\
    &=\frac{-1}{Z_{a,b}a}\int_{\R^d} e^{-b\|y\|^2/2}\int_{\R^d} \nabla U(x)\cdot \nabla e^{-aU(x)}dxdy \nonumber \\
    &=\frac{1}{Z_{a,b}a}\int_{\R^d} e^{-b\|y\|^2/2}\int_{\R^d} \Delta U(x) e^{-aU(x)}dxdy \nonumber \\
    &=\frac1a \E_{\bmu^{a,b}}[\Delta U(X^{a,b})] \nonumber \\
    &\le \frac{dL}{a}, \label{eq:DN2_boundnabla}
\end{align}
where the third-to-last equality follows from integration by parts, and the final inequality follows from $L$-smoothness. Plugging this into \eqref{eq:DN2_tbs2} then yields
\begin{equation}\label{eq:DN2_bdnabla}
    \E_{\t \bmu_{0}}[\|\nabla U(\t X_{0})\|^2]\le 2L^2W_0+\frac{2dL}a,
\end{equation}
which, replacing in Equation \eqref{eq:DN2_tbs}, gives
\begin{equation}
    \E_{\tilde \bmu_{0, t}}\left[\left\|\t X_{0}- \t X_{t}\right\|^2\right]\le 2\beta^2t^2\left(2L^2W_0+\frac{2dL}a\right)+2t\int_0^t\E_{\t\bmu_{0, t}}\left[\left\|\t Y_{s}\right\|^2\right]ds+2\sigma^2_x d\cdot t. \label{eq:DN2_bddiff}
\end{equation}
By Equations \eqref{SA2::eq:integral_sub}
\begin{align}
    \E_{\t\bmu^{(0, s)}}\left[\left\|\t Y_{t}\right\|^2\right]
    &\le 3\E_{\t\bmu_{0}}\left[\left\|\t Y_{0}\right\|^2\right]+3\gamma^2 \E_{\t\bmu_{0}}\left[\left\|\int_0^s \nabla U(\t X_{0})dr\right\|^2\right]+6\sigma^2_y\E_{\t\bmu^{(0, s)}}\left[\left\|\int_0^s e^{-\alpha(s-r)}dB^y_r\right\|^2\right] \nonumber \\
    &\le 6W_0+6\E_{\bmu^{a,b}}\left[\left\|Y^{a,b}\right\|^2\right]+3\gamma^2s^2\E_{\t\bmu_{0}}\left[\left\|\nabla U(\t X_{0})\right\|^2\right]+6\sigma^2_y d\cdot s \nonumber \\
    &\le 6 W_0+6\E_{\bmu^{a,b}}\left[\left\|Y^{a,b}\right\|^2\right]+3\gamma^2s^2\left(2L^2W_0+\frac{2dL}a\right)+6\sigma^2_y d\cdot s. \label{eq:DN2_bdy}
\end{align}
Now we realize that analogous computations to \eqref{eq:DN2_boundnabla} give that 
\[
    \E_{\bmu^{a,b}}\left[\left\|Y^{a,b}\right\|^2\right]\le \frac{d}{b},
\]
which, plugged into \eqref{eq:DN2_bdy}, yields, after rearranging the terms,
\[
    \E_{\t\bmu_{t}}\left[\|\t Y_{t}\|^2\right] \le 6W_0\left(1+\gamma^2L^2t^2\right)+6\frac{d}{b}+6d\cdot t\left(\frac{\gamma^2Lt}{a}+\sigma^2_y\right).
\]
Returning to Equation \eqref{eq:DN2_bddiff}, we obtain
\begin{align*}
    \E_{\tilde \bmu_{0, t}}\left[\left\|\t X_{0}- \t X_{t}\right\|^2\right]
    &\le 2\beta^2t^2\left(2L^2W_0+\frac{2dL}a\right)+2\sigma^2_x d\cdot t \\
    &\qquad +2t\int_0^t\left(6W_0\left(1+\gamma^2L^2s^2\right)+6\frac{d}{b}+6ds\left(\frac{2\gamma^2Ls}{a}+\sigma^2_y\right)\right)ds \\
    &= 2\beta^2t^2\left(2L^2W_0+\frac{2dL}a\right)+2\sigma^2_x d\cdot t \\
    &\qquad +2t\left(6W_0\left(t+\frac{\gamma^2L^2t^3}{3}\right)+\frac{6d\cdot t}{b}+2d\cdot t^3\frac{2\gamma^2L}{a}+3d\cdot t^2\sigma^2_y\right) \\
    &= W_0\cdot\left[12t^2+4\beta^2L^2 t^2+4\gamma^2L^2 t^3\right]+d\cdot \left[2\sigma^2_x t+\frac{12t^2}{b}+\frac{4\beta^2L}{a}\cdot t^2+3\sigma^2_y\cdot t^3+\frac{4\gamma ^2L}{a}\cdot t^4\right].
\end{align*}
Using that $t\le h< 1$ and recalling the notation defined in Equation \eqref{SA2::eq:defs}, we obtain
\[
    \E_{\tilde \bmu_{0, t}}\left[\left\|\t X_{0}- \t X_{t}\right\|^2\right]\le W_0\cdot A\cdot t^2+d\cdot B\cdot t.
\]
Combining the above with \eqref{SA2::eq:longendcont}, we obtain
\begin{align*}
    \frac{d}{dt}\KL(\t\bmu_{t}\|\bmu^{a,b})
    &\le -\frac{\min(\sigma^2_x, \sigma^2_y)}2\Fi(\t\bmu_{t}\|\bmu^{a,b})+\tau \cdot W_0\cdot A\cdot t^2+ \tau \cdot d\cdot  B\cdot t.
\end{align*}
Applying a log-Sobolev inequality, we obtain
\[
    \frac{d}{dt}\KL(\t\bmu_{t}\|\bmu^{a,b})\le -\theta \KL(\t\bmu_{t}\|\bmu^{a,b})+\tau A\cdot W_0\cdot t^2+\tau B\cdot d\cdot t.
\]
Rearranging the terms yields
\[
    \frac{d}{dt}e^{\theta t}\KL(\t\bmu_{t}\|\bmu^{a,b})\le e^{\theta t}\tau A\cdot W_0\cdot t^2+e^{\theta t}\tau B\cdot d\cdot t.
\]
As $t\le h$, 
\[
    \frac{d}{dt}e^{\theta t}\KL(\t\bmu_{t}\|\bmu^{a,b})\le e^{\theta t}\tau A\cdot W_0\cdot h^2+e^{\theta t}\tau B\cdot d\cdot h,
\]
which we integrate from $0$ to $h$ in $t$, yielding
\begin{align*}
    e^{\theta h}\KL(\t\bmu^{(h)}\|\bmu^{a,b})-\KL(\t\bmu_{0}\|\bmu^{a,b})
    &\le \frac{e^{\theta h}-1}{\theta}\tau A\cdot W_0\cdot h^2+\frac{e^{\theta h}-1}{\theta}\tau B\cdot d\cdot h\\
    &\le 2\tau A\cdot W_0\cdot h^3+2\tau B\cdot d\cdot h^2,
\end{align*}
where we used $e^c\le 1+2c$ for $0<c<1$ for $c=\theta h$. Using Talagrand's Inequality \citep{talagrand_transportation_1996} and denoting $\KL_0=\KL(\t\bmu_{0}\|\bmu^{a,b})$ gives us that 
\[
    e^{\theta h}\KL(\t\bmu^{(h)}\|\bmu^{a,b})-\KL_0\le \frac4{\rho}\tau A\cdot \KL_0\cdot h^3+2\tau B\cdot d\cdot h^2,
\]
This implies that 
\[  
    \KL(\t\bmu^{(h)}\|\bmu^{a,b})\le e^{-\theta h}\KL_0\cdot\left(1+\frac4{\rho}\tau A\cdot h^3\right)+ 2e^{-\theta h}\tau B\cdot d\cdot h^2,
\]
which implies \eqref{SA2::eq:res1}, using \eqref{SA2::eq:assprop} and the fact that $\ln(x)\le x-1$ for $x> 0$.

Iteratively applying the result of \eqref{SA2::eq:res1}, we obtain, where $\KL_k\coloneqq \KL(\t\bmu_{kh}\|\bmu^{a,b})$ for all $k$,
\begin{align*}
    \KL_K
    &\le e^{-\theta h/2}\KL_{K-1}+\hat Bh^2 \\
    &\le e^{-2\theta h/2}\KL_{K-2}+(e^{-\theta h/2}+1)\hat B h^2\\
    &\le \cdots \\
    &\le e^{-K\theta h/2}\KL_{0}+\left(\sum_{k=0}^{K-1}e^{-k\theta h/2}\right)\hat Bh^2,
\end{align*}
which, by bounding the finite sum by an infinite sum and using $e^{-c}\le 1-\frac23c$ for $0<c<1$, yields \eqref{SA2::eq:result}.
\end{proof}
\end{document}